\documentclass[reqno,12pt]{amsart}

\usepackage[utf8]{inputenc}

\usepackage[usenames]{color}
\usepackage{enumerate}
\usepackage[T1]{fontenc}
\usepackage[utf8]{inputenc}
\usepackage{amsmath,amssymb,amsfonts,amsthm,mathtools}
\usepackage{mathrsfs}
\usepackage{enumitem}
\usepackage{xcolor}
\usepackage{url}

\numberwithin{equation}{section}

 \usepackage{amsmath,amsthm,amsfonts,amssymb,enumerate,amscd}

\newtheorem{theorem}{Theorem}[section] 
\newtheorem{corollary}[theorem]{Corollary}
\newtheorem{proposition}[theorem]{Proposition}

\newtheorem{example}[theorem]{Example}

\newtheorem{remark}[theorem]{Remark}

\newcommand{\R}{\mathbb R}

\definecolor{verdec}{rgb}{0.76,1.00,0.76}
\definecolor{lila}{rgb}{0.50,0.00,0.50}
\definecolor{rojoc}{rgb}{1.00,0.56,0.56}
\definecolor{lila1}{rgb}{1,0,1}{}
\definecolor{azulc}{rgb}{0.81,1.00,1.00}
\definecolor{lilac}{rgb}{0.78,0.56,0.79}

\def\p{\mathbf p}

\title[Functional calculus for higher-order selfadjoint operators]
{Powers and Functional Calculus for $m$-Selfadjoint and Skew $m$-Selfadjoint Operators}

\author{M. Faghih-Ahmadi, T. Berm\'udez, K. Hedayatian,  H. Zaway}
\date{\today}
\begin{document}

\maketitle

\begin{abstract}
We study powers and functional calculi of higher-order selfadjoint and skew higher-order selfadjoint operators on complex Hilbert spaces. First, we compare the behavior of their powers with the corresponding theory for $m$-isometries. We prove that if $T$ is invertible and two powers $T^r$ and $T^s$ are, respectively, $m$- and $\ell$-selfadjoint, then $T^{\gcd(r,s)}$ is $\min\{m,\ell\}$-selfadjoint. In particular, invertibility yields the expected converse for consecutive powers. We also show that two consecutive skew $m$-selfadjoint powers must be non-invertible; in fact, nilpotent. For polynomials, we obtain coefficient characterizations under a cardinality assumption on the spectrum. Finally, for holomorphic functions we give spectral characterizations according to whether the spectrum is finite, or the domain of analyticity is a region, and we derive consequences for M\"obius transformations and $m$-isometries. Several examples illustrate why the spectral hypotheses cannot be omitted.
\end{abstract}

\maketitle

\section{Introduction}

Throughout the paper, $H$ denotes a complex Hilbert space and $B(H)$ the algebra of all bounded linear operators on $H$. For $T\in B(H)$, we denote its spectrum by $\sigma(T)$ and its adjoint by $T^*$. 

An operator $T\in B(H)$ is called an $m$-selfadjoint operator if
\begin{equation}\label{def:mself}
\sum_{k=0}^{m}(-1)^{m-k}\binom{m}{k}T^{*k}T^{m-k}=0.
\end{equation}
Helton \ introduced the notion of $m$-selfadjoint operators \cite{Helton}; see also \cite{H-in}.

And  $T$ is \emph{skew $m$-selfadjoint} if $iT$ is $m$-selfadjoint. The case $m=1$ gives the usual notions of selfadjoint and skew-selfadjoint operators.

These classes could be defined by a general framework of hereditary roots of polynomials of two variables. If 
 $ 
p(x,y)=\sum_{j,k}c_{j,k}y^jx^k
 $  
is a polynomial in two variables, then $T$ is called a hereditary root of $p$ if
 $$ 
\sum_{j,k}c_{j,k}T^{*j}T^k=0.
 $$  
Thus, $m$-selfadjoint corresponds to the hereditary polynomial $(y-x)^m$, while skew $m$-selfadjoint corresponds to $(y+x)^m$. This perspective places these classes close to the theory of $m$-isometries, whose hereditary roots are $(1-yx)^m$.

McCullough and Rodman proved that every $m$-selfadjoint operator on a finite dimensional Hilbert space $H$ is of the form $A+Q$, where $A$ is a selfadjoint operator on $H$ and $Q$ is a nilpotent operator that commutes with $A$, \cite[Theorem 2.2]{mr}. However, this property is not valid on infinite Hilbert spaces; that is, there exists $m$-selfadjoint operator that is not of the form $A+Q$ where $A$ is selfadjoint and commutes with the nilpotent operator $Q$, \cite[Proposition 2.1]{sh}. In \cite[Corollary 2.4]{sh} the authors characterize $m$-selfadjoint operators $T$  such that $\sigma (T)$ is a finite set: it is proved that if $T$ is an $m$-selfadjoint operator with finite spectrum, then $T$ is of the form $A+Q$, where $A$ is selfadjoint and commutes with the nilpotent operator $Q$. 

\ \par

The purpose of this paper is to study two related questions concerning higher-order selfadjoint: to what extent can the property be recovered from powers, and how is it preserved under polynomial and holomorphic functional calculus?

The analogy with $m$-isometries is particularly useful for powers. It is known that if $T^r$ and $T^s$ are respectively an $m$-isometry and an $\ell$-isometry, then $T^t$ is an $h$-isometry, where
 $ t:=\gcd(r,s) $ and $ h:=\min\{m,\ell\} $, see \cite[Theorem 3.6]{BDM}.   
For $m$-selfadjoint operators, however, the corresponding assertion is false without an additional hypothesis. The simplest example is a non-selfadjoint and nilpotent operator $T$ with $T^2=0$. 

\ \par 

Our first main result shows that invertibility is sufficient to recover the expected conclusion for the class of higher-order selfadjoint operators. 

We then turn to polynomial functional calculus. If $T$ is $m$-selfadjoint, every polynomial with real coefficients maps $T$ to an $m$-selfadjoint operator; see \cite{sh}. Conversely, we prove that when the spectrum of $T$ contains sufficiently many points, the coefficients of a polynomial $p$ of a fixed degree for which $p(T)$ is $m$-selfadjoint are forced to be real. Also, we obtain the corresponding statement for skew $m$-selfadjoint operators. 

It is well-known that the spectrum of $m$-selfadjoint operator is contained in the real axis; hence, under suitable assumptions on the domain of analyticity, the condition that $f(\sigma(T))$ be contained in $\R$ is the condition that guarantees $m$-selfadjointness of $f(T)$. Analogous statements hold for skew $m$-selfadjoint operators, with $i\R$ in place of $\R$. We also discuss the special case of M\"obius transformations and the relation with $m$-isometries.

\ \par 

The paper is organized as follows. Section~2 recalls basic facts and establishes the result for powers. Section~3 treats general polynomials. Section~4 is devoted to holomorphic functional calculus and its applications to M\"obius transformations and $m$-isometries. We finish with the construction of  $m$-isometries using the functional calculus of $m$-selfadjoint or skew $m$-selfadjoint operators. 

\section{Powers of higher order of selfadjoint and skew selfadjoint} 

We begin with some properties that will be used.

\begin{proposition}\label{properties}
Let $T\in B(H)$. The following properties hold.
\begin{enumerate}[label=(\roman*)]
\item \cite[Corollary~24]{stankus} If $T$ is $m$-selfadjoint, then $\sigma(T)\subseteq\R$. In particular, if $T$ is skew $m$-selfadjoint, then $\sigma(T)\subseteq i\R$.

\item \cite[Corollary~2.5]{sh} If $T$ is $m$-selfadjoint and quasinilpotent, then $T$ is nilpotent of order at most $\left[ \frac{m+1}{2}\right] $, where $[s]$ denotes the integer part of $s$.

\item \cite[Theorem~2.1]{SH-E} $T$ is skew $m$-selfadjoint if and only if the Cayley transform 
 $ 
\Phi(T):=(T+I)(T-I)^{-1}
 $  
is an invertible $m$-isometry.

\item \cite[Corollary~9]{Le} If $T$ is $m$-selfadjoint, then $T^k$ is $m$-selfadjoint for every positive integer $k$.

\item \cite[Corollary~2.1]{sh} If $T$ is $m$-selfadjoint and $p$ is a polynomial with real coefficients, then $p(T)$ is $m$-selfadjoint.

\item \cite[Theorem~3.6]{BDM} If $r,s,m,\ell$ are positive integers and $T^r$ is an $m$-isometry while $T^s$ is an $\ell$-isometry, then $T^t$ is an $h$-isometry, where
 $ t:=\gcd(r,s)$ and $ h:=\min\{m,\ell\} $.
\end{enumerate}
\end{proposition}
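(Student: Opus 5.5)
This proposition collects known facts, and each item carries a citation. The plan is therefore not to reprove everything from scratch. Instead I indicate for each item the mechanism behind the cited result, and prove directly the parts that follow quickly from the definition \eqref{def:mself}. The main tool is the hereditary calculus. For a polynomial $p(x,y)=\sum c_{j,k}y^jx^k$, write $p(T):=\sum c_{j,k}T^{*j}T^k$, where powers of $T^*$ always stand to the left. If $q$ is a polynomial in $x$ alone, then $(p\cdot q)(T)=p(T)\,q(T)$ and $(\bar q(y)\,p)(T)=\bar q(T^*)\,p(T)$, where $\bar q$ has conjugated coefficients. Hence the set of hereditary polynomials annihilated by $T$ is closed under multiplication by real polynomials in $x$ and in $y$ (on the appropriate sides).

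For (iv) and (v), let $p$ have real coefficients. Then $p(y)-p(x)=(y-x)\,g(x,y)$ for a polynomial $g$ with real coefficients, so
\[
(p(y)-p(x))^m=(y-x)^m g(x,y)^m .
\]
Expand $g^m=\sum d_{j,k}y^jx^k$. The hereditary value of $(y-x)^m y^jx^k$ at $T$ is $T^{*j}\bigl(\sum_k(-1)^{m-k}\binom mk T^{*k}T^{m-k}\bigr)T^k$, and this vanishes when $T$ is $m$-selfadjoint. Summing over $j,k$, the hereditary value of $(p(y)-p(x))^m$ at $T$ is zero. Because $p(T)^*=p(T^*)$, that hereditary value equals $\sum_k(-1)^{m-k}\binom mk p(T)^{*k}p(T)^{m-k}$, so $p(T)$ is $m$-selfadjoint. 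This proves (v), and (iv) is the special case $p(x)=x^k$.

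For (i) I would use approximate point spectrum. Since $\partial\sigma(T)\subseteq\sigma_{ap}(T)$, it suffices to show that $\sigma_{ap}(T)\subseteq\R$. Take unit vectors $x_n$ with $(T-\lambda)x_n\to0$. Pair the defining identity with $x_n$ in the form $\sum_k(-1)^{m-k}\binom mk\langle T^{m-k}x_n,T^kx_n\rangle=0$. Replacing $T^jx_n$ by $\lambda^jx_n$ up to $o(1)$ gives $(\bar\lambda-\lambda)^m=0$, so $\lambda\in\R$. Then $\sigma(T)$ is a compact set whose boundary lies in $\R$. Such a set has empty interior, because any interior point would give a boundary point off the real line on the vertical segment through it. Hence $\sigma(T)=\partial\sigma(T)\subseteq\R$. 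The skew case follows by applying this to $iT$.

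Item (ii) is the main obstacle, since it needs the structure theory of \cite{sh}. I would follow it as follows. Write $m$-selfadjointness as $\Delta^m(I)=0$ with $\Delta(X)=T^*X-XT$. A nilpotent-type argument, using Rosenblum's theorem together with quasinilpotency, shows that $T^{n}=0$ for $n=[\frac{m+1}{2}]$; I defer to the cited result here. For (iii), set $S=\Phi(T)$. Conjugating $(y+x)^m$ through the transform $x\mapsto(x+1)/(x-1)$ turns it into a nonzero multiple of $(1-\bar w z)^m$ divided by factors $(\bar w-1)^m(z-1)^m$. These factors correspond to multiplication by the invertible operators $(T^*-I)^{-m}$ on the left and $(T-I)^{-m}$ on the right, so the two vanishing conditions are equivalent. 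Invertibility of $T-I$ holds because $\sigma(T)\subseteq i\R$ by (i). Item (vi) is quoted from \cite{BDM} and is used only as the model for the analogous statement about powers.
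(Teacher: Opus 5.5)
The paper gives no proof of this proposition; every item is quoted from the cited sources. For the items you actually argue, your route is therefore necessarily different, and those arguments are correct.

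\emph{Item (i).} The approximate-eigenvector computation giving $(\bar\lambda-\lambda)^m=0$, together with $\partial\sigma(T)\subseteq\sigma_{ap}(T)$ and your vertical-segment argument, is the standard proof.

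\emph{Items (v) and (iv).} The factorization $p(y)-p(x)=(y-x)g(x,y)$ does the work. So does the observation that $(p(y)-p(x))^m=\sum_k(-1)^{m-k}\binom mk p(y)^kp(x)^{m-k}$ is a sum of separated terms, whose hereditary values are $p(T)^{*k}p(T)^{m-k}$ because the coefficients are real. This is exactly right.

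\emph{Item (iii).} Your rational substitution can be made rigorous as follows. With $S=\Phi(T)$, use $T=(S+I)(S-I)^{-1}$ and $S-I=2(T-I)^{-1}$ to get
\[
(S^*-I)^m\Bigl(\sum_{k=0}^m\binom mk T^{*k}T^{m-k}\Bigr)(S-I)^m=(-2)^m\sum_{k=0}^m(-1)^k\binom mk S^{*k}S^k .
\]
This is legitimate even though the hereditary calculus is not multiplicative, because after clearing denominators every term is a polynomial in $S^*$ times a polynomial in $S$. There is one small omission. In the forward direction you must also show that $\Phi(T)$ is invertible, i.e.\ that $T+I$ is invertible, not only $T-I$. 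This again follows from (i), since $-1\notin i\mathbb{R}$. In the converse direction, invertibility of $T-I$ is part of the hypothesis that $\Phi(T)$ is defined.

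What your approach buys is that (i) and (iii)--(v) can be checked in place from the definition. The citations buy uniform coverage, including (ii) and (vi), which genuinely need more.

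\emph{Item (ii).} You defer this one, but the mechanism you point to would not work. Rosenblum's theorem requires $\sigma(T^*)\cap\sigma(T)=\emptyset$, whereas here both spectra equal $\{0\}$. The most one extracts (via Lumer--Rosenblum) is that $\Delta=L_{T^*}-R_T$ is quasinilpotent, which says nothing about nilpotency of $T$.

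A self-contained proof goes through growth instead. Since $\frac{d}{dt}\bigl(e^{itT^*}e^{-itT}\bigr)=i\Delta\bigl(e^{itT^*}e^{-itT}\bigr)$, one gets $(e^{-itT})^*e^{-itT}=\sum_{k<m}\frac{(it)^k}{k!}\Delta^k(I)$ for real $t$. Hence $\|e^{-itT}\|^2=O\bigl((1+|t|)^{m-1}\bigr)$. Quasinilpotency makes $z\mapsto e^{zT}$ of exponential type zero. Phragm\'en--Lindel\"of in the two half-planes, after dividing by $(z\pm1)^{(m-1)/2}$, then gives $\|e^{zT}\|=O\bigl((1+|z|)^{(m-1)/2}\bigr)$ on all of $\mathbb{C}$. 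Cauchy's estimates force $T^n=0$ for every $n>(m-1)/2$, which is exactly $T^{[\frac{m+1}{2}]}=0$.
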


Gu and Stankus proved that if $A, B\in B(H)$ are double commuting  (that is, $AB=BA$ and $A^*B=BA^*$), with $A$ an $n$-selfadjoint and $B$ an $\ell $-selfadjoint, then $AB$ is $(m+\ell -1)$-selfadjoint, \cite[Theorem 9]{Gu-S}. Later, Le proved that the condition about double commuting is not necessary; it is enough that $A$ and $B$ commute; see \cite[Corollary 9]{Le}. In particular, if $A$ is $m$-selfadjoint, then so is any power $A^k$. 

The commuting property given by Le is needed  in order that the product is an $m$-selfadjoint as prove the following example. 

\begin{example}
    {\rm 
     Let $S,T\in B(\mathbb C ^3) $ given by 
$$
S:=\begin{pmatrix}
    0 & i& 0\\
    -i & 0& 0\\
    0& 0& 1
\end{pmatrix} \mbox{ and } T:=\begin{pmatrix}
    1& 1& 0\\
    0&1 &0\\
    0& 0& 2 
\end{pmatrix} \;.
$$     
It is clear that $S$ is selfadjoint, and since $T$ is the sum of a selfadjoint diagonal operator and a commuting nilpotent operator of order 2, then $T$ is 3-selfadjoint, \cite[Theorem 2.2]{mr}. However, 
$$
ST:=\begin{pmatrix}
0& i& 0\\
-i & -i& 0\\
0& 0& 2  
 \end{pmatrix}  \neq \begin{pmatrix}
     -i & i & 0\\
     -i & 0& 0\\
     0& 0& 2
 \end{pmatrix} =TS\;,
$$     
and $\sigma (TS)=\sigma (ST)=\{ 2, \frac{-i+\sqrt{3}}{2}, \frac{-i-\sqrt{3}}{2} \}$. Hence neither $ST$  nor $TS$ are $m$-selfadjoint for any positive integer $m$, by  Proposition \ref{properties} (i).    
     }
\end{example}

\begin{remark}\label{nilremark}
{\rm Notice that if an operator is simultaneously skew $m$-selfadjoint and $m'$-self\-adjoint, then its spectrum is contained in $\R\cap i\R=\{0\}$. Hence it is quasinilpotent and therefore nilpotent by Proposition~\ref{properties}(ii). Conversely, if $T$ is nilpotent of order $q$, then $T$ is $(2q-1)$-selfadjoint and skew $(2q-1)$-selfadjoint.}
\end{remark}

The powers of $m$-isometries have been extensively studied \cite{BDM, Gu, Gu-S, jablonski, Le, patel}. The product of isometries is isometric. However, this property is not shared by the class of $m$-isometries. That is, it is possible to give an isometry $S$ and a 2-isometry $T$ such that neither $ST$ nor $ TS$ is an $m$-isometry for any $m$, see \cite[Example 3.1]{BMN-product}.  Proposition \ref{properties} (vi) gives sufficient conditions to obtain that some powers are $m$-isometries. 
  In particular, if $T^r$ and $T^{r+1}$ are $m$-isometries, then $T$ itself is an $m$-isometry; see \cite[Corollary 3.7]{BDM}. It is natural to ask whether an analogous statement holds for $m$-selfadjoint or skew $m$-selfadjoint operators.

For selfadjoint operators, a useful starting point is the following result.

\begin{proposition}\label{powers-basic}
Let $T\in B(H)$. Then:
\begin{enumerate}[label=(\roman*)]
\item \cite[Proposition~2.1]{DG} If $T^2$ and $T^3$ are selfadjoint and either $T$ is invertible or the range of $T^2$, $R(T^2)$, is dense in $H$, then $T$ is selfadjoint.
\item If $T^2$ and $T^3$ are selfadjoint, then $\sigma(T)\subseteq\R$ and
 $ 
T^{*2}(T^*-T)T^2=0.
 $  
\item If $T^\ell$ is skew $m$-selfadjoint and $T^k$ is skew $m'$-selfadjoint, where $k$ and $\ell$ have opposite parity, then $T$ is a nilpotent operator.
\end{enumerate}
\end{proposition}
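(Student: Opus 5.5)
Part (i) is taken from \cite[Proposition~2.1]{DG}, so the plan concerns (ii) and (iii).

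\textbf{Part (ii).} Write $A=T^2$ and $B=T^3$, both selfadjoint.
\begin{itemize}
\item \emph{Spectrum.} By the spectral mapping theorem, $\lambda\in\sigma(T)$ gives $\lambda^2,\lambda^3\in\R$. If $\lambda\neq 0$, then $\lambda=\lambda^3/\lambda^2\in\R$. Hence $\sigma(T)\subseteq\R$.
\item \emph{Identity.} We use $T^{*2}=T^2$ and $T^{*3}=T^3$. Then
\[
T^{*2}T^*T^2=T^{*3}T^2=T^3T^2=T^5,
\]
and
\[
T^{*2}TT^2=T^{*2}T^3=T^2T^3=T^5.
\]
Subtracting gives $T^{*2}(T^*-T)T^2=0$.
\end{itemize}
This part is routine.

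\textbf{Part (iii).} The plan is to reduce to Remark~\ref{nilremark}, i.e.\ to show that some power of $T$ is both skew $m$-selfadjoint and $m'$-selfadjoint.
\begin{itemize}
\item \emph{Spectrum.} By Proposition~\ref{properties}(i), $\sigma(T^\ell),\sigma(T^k)\subseteq i\R$. By spectral mapping, for $\lambda\in\sigma(T)$ both $\lambda^\ell$ and $\lambda^k$ lie in $i\R$. Suppose $\lambda\neq0$ and write $\lambda=re^{i\theta}$. Then $\ell\theta\equiv \pi/2 \pmod \pi$ and $k\theta\equiv\pi/2\pmod\pi$.
\item \emph{Parity argument.} Set $d=\gcd(k,\ell)$, with $k=dk'$ and $\ell=d\ell'$. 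Since $k,\ell$ have opposite parity, exactly one of $k',\ell'$ is even and $d$ is odd; say $k'$ is even. Put $\phi=d\theta$. Then $\ell'\phi\equiv\pi/2$ and $k'\phi\equiv\pi/2 \pmod\pi$. Choose integers $a,b$ with $ak'+b\ell'=1$; then $\phi\equiv(a+b)\pi/2\pmod\pi$.
\begin{itemize}
\item If $\phi\equiv0\pmod\pi$, then $\ell'\phi\equiv0$, a contradiction.
\item If $\phi\equiv\pi/2\pmod\pi$, then $k'\phi\equiv k'\pi/2\equiv0\pmod\pi$ because $k'$ is even, again a contradiction.
\end{itemize}
Hence $\lambda=0$, so $\sigma(T)=\{0\}$ and $T$ is quasinilpotent.
\item \emph{From quasinilpotent to nilpotent.} Quasinilpotence alone does not give nilpotence, so we use the operator structure, taking $k$ even without loss of generality.
\begin{itemize}
\item Let $S=T^k$, which is skew $m'$-selfadjoint and quasinilpotent.
\item Then $iS$ is $m'$-selfadjoint and quasinilpotent, so by Proposition~\ref{properties}(ii) it is nilpotent.
\item Hence $T^{k}$ is nilpotent, and therefore $T$ is nilpotent.
\end{itemize}
\end{itemize}
This also makes the parity hypothesis visible: it is used only to force $\sigma(T)=\{0\}$, after which a single power suffices.

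The main obstacle is the angle bookkeeping in the parity argument, which must rule out every nonzero spectral point via the gcd relation. A fallback is to use $\lambda^{k\ell}$: it equals $(\lambda^\ell)^k$ and $(\lambda^k)^\ell$, and for $t\in\R$ one has $(it)^k$ real and $(it)^\ell$ purely imaginary, since one exponent is even and the other odd. So $\lambda^{k\ell}\in\R\cap i\R=\{0\}$, giving $\lambda=0$ directly. This is cleaner, and I would present it instead of the angle argument.
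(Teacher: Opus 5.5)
Your proof is correct and follows the paper's route. You use spectral mapping to get $\sigma(T)\subseteq\R$, and $\sigma(T)=\{0\}$ in (iii); your $\lambda^{k\ell}\in\R\cap i\R$ argument spells out what the paper just calls ``spectral mapping''. For the identity you use $T^{*2}=T^2$ and $T^{*3}=T^3$ directly, as the paper does.

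Your last step in (iii) is more careful than the paper's. The paper invokes Proposition~\ref{properties}(ii) for $T$ itself, even though $T$ is not known to be $m$-selfadjoint. Applying it instead to the quasinilpotent $m'$-selfadjoint operator $iT^k$, and then passing from nilpotence of $T^k$ to that of $T$, closes that gap.
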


\begin{proof}
{\it (i)} If $T$ is invertible, the result follows from the relations $T^{*2}=T^2$ and $T^{*3}=T^3=T^*T^2$. If $R(T^2)$ is dense, it is similar. 

{\it (ii)} From $T^2=T^{*2}$ and $T^3=T^{*3}$, it is clear that $\sigma (T)$ is real by the spectral mapping theorem  and Proposition~\ref{properties} (i). Also, we obtain  $T^{*2} (T^* -T)=0$ and therefore $T^{*2} (T^* -T)T^2=0$.
 
{\it (iii)} Proposition~\ref{properties}(i) gives  $ \sigma(T^k)\subseteq i\R$, and $\sigma(T^\ell)\subseteq i\R $.  
If $k$ and $\ell$ have opposite parity, the spectral mapping theorem implies $\sigma(T)=\{0\}$. Thus, $T$ is quasinilpotent, and Proposition~\ref{properties}(ii) proves that $T$ is a nilpotent operator. 
\end{proof}

The next example shows that no theorem recovering $m$-selfadjointness from two consecutive powers can hold for arbitrary non-invertible operators.

\begin{example}\label{nilcounter}
{\rm Let
 $ 
T:=\begin{pmatrix}0&1\\0&0\end{pmatrix}\in B(\mathbb {C}^2).
 $  
Then $T^2=T^3=0$, so both powers are selfadjoint, whereas $T$ is not selfadjoint. Notice that $T$ is $3$-selfadjoint. }
\end{example}

We now prove the main result of this section, which gives sufficient conditions to obtain a similar result to Proposition \ref{properties} (vi), for higher order of selfadjoint operator.

\begin{theorem}\label{powers}
Let $T\in B(H)$ and let $r,s,m,\ell$ be positive integers with  $ t:=\gcd(r,s)$ and $\qquad h:=\min\{m,\ell\}$.   
If $T$ is invertible, $T^r$ is $m$-selfadjoint and $T^s$ is $\ell$-selfadjoint, then $T^t$ is $h$-selfadjoint.
\end{theorem}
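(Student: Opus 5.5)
The plan is to translate the hereditary conditions into statements about two commuting operators on $B(H)$, and then use invertibility of $T$ to reduce the problem to a one-variable polynomial identity, where B\'ezout's identity applies.

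\emph{Reduction.} First I reduce to the case $t=1$. Put $S:=T^t$, $r'=r/t$ and $s'=s/t$. Then $\gcd(r',s')=1$, $S$ is invertible, $S^{r'}=T^r$ is $m$-selfadjoint and $S^{s'}=T^s$ is $\ell$-selfadjoint. So it suffices to show that $S$ is $h$-selfadjoint, and from now on I assume $\gcd(r,s)=1$.

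\emph{Operator reformulation.} On the Banach algebra $B(H)$, consider the left multiplication $L(X):=T^*X$ and the right multiplication $R(X):=XT$. These commute, i.e.\ $LR=RL$. Expanding binomially,
\[
(L^k-R^k)^n(I)=\sum_{j=0}^{n}(-1)^{n-j}\binom{n}{j}T^{*kj}T^{k(n-j)}.
\]
Hence, by \eqref{def:mself}, $T^k$ is $n$-selfadjoint if and only if $(L^k-R^k)^n(I)=0$. The hypotheses therefore read
\[
(L^r-R^r)^m(I)=0 \quad\text{and}\quad (L^s-R^s)^\ell(I)=0,
\]
and the goal is $(L-R)^h(I)=0$.

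\emph{Use of invertibility.} Since $T$ is invertible, $R$ is invertible on $B(H)$, with $R^{-1}(X)=XT^{-1}$. Set $Z:=LR^{-1}$; it commutes with $L$ and $R$. Then $L^k-R^k=R^k(Z^k-1)$, so
\[
(Z^r-1)^m(I)=R^{-rm}(L^r-R^r)^m(I)=0 \quad\text{and}\quad (Z^s-1)^\ell(I)=0.
\]

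\emph{B\'ezout step.} In $\mathbb C[z]$, the polynomials $z^r-1$ and $z^s-1$ have simple roots, namely the $r$-th and $s$-th roots of unity. Because $\gcd(r,s)=1$, their only common root is $1$. Consequently $\gcd\big((z^r-1)^m,(z^s-1)^\ell\big)=(z-1)^h$. By B\'ezout there are polynomials $a,b$ with
\[
(z-1)^h=a(z)(z^r-1)^m+b(z)(z^s-1)^\ell.
\]
Substituting $Z$ and applying both sides to $I$ gives $(Z-1)^h(I)=0$. Finally, since $R$ commutes with $Z$,
\[
(L-R)^h(I)=R^h(Z-1)^h(I)=0,
\]
which says exactly that $T$ is $h$-selfadjoint.

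\emph{Main obstacle.} The crucial point is passing from the two-variable homogeneous polynomials $(x^r-y^r)^m$ and $(x^s-y^s)^\ell$, whose ideal is not principal, to one-variable polynomials where B\'ezout applies. This is exactly where invertibility enters, through the invertibility of $R$; Example \ref{nilcounter} shows that this step cannot be dropped. The remaining care is purely bookkeeping: the order of composition of the operators and the fact that everything is applied to $I$ are harmless, because $L$, $R$, $R^{-1}$ and $Z$ all commute.
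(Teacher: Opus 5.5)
Your proof is correct and follows the paper's argument essentially step for step. Both use the commuting multiplication operators $L_{T^*}$, $R_T$ on $B(H)$, the operator $L_{T^*}R_T^{-1}$ (available because $T$ is invertible), and B\'ezout's identity for $(z^r-1)^m$ and $(z^s-1)^\ell$. The only difference is cosmetic: you first pass to $T^t$ to reduce to $\gcd(r,s)=1$, so your B\'ezout step uses $(z-1)^h$ where the paper uses $(z^t-1)^h$.
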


\begin{proof}
Consider the left and right multiplication operators on $B(H)$,
 $L_{T^*},$ and $R_T,$  that is,
 $ L_{T^*}(X):=T^*X$ and  $R_T(X):=XT$, for any $X\in B(H).
 $  
They commute, and for every positive integers $q$ and $n$, we have that 
$$
(L_{T^*}^q-R_{T}^q)^n(I)=
\sum_{j=0}^{n}(-1)^{n-j}\binom{n}{j}T^{*qj}T^{q(n-j)}\;.
$$
Consequently,
 $$
T^q\text{ is $n$-selfadjoint}
\quad\Longleftrightarrow\quad
(L_{T^*}^q-R_{T}^q)^n(I)=0.
 $$  

Since $T$ is invertible, then $R_T$ is invertible and $R^{-1}_T=R_{T^{-1}}$. Denote 
 $ A:=L_{T^*}R_{T}^{-1}$.   
Then $A$ commutes with $R_T$ and
 $$ 
L_{T^*}^r-R_{T}^r=R_{T}^r(A^r-I)=(A^r-I)R_{T}^r,\qquad
L_{T^*}^s-R_{T}^s=R_{T}^s(A^s-I)= (A^s-I)R^s_{T}.
 $$  
Since $T^r$ is $m$-selfadjoint and $T^s$ is $\ell $-selfadjoint, then  
 $$ 
(A^r-I)^m(I)=0,\qquad (A^s-I)^\ell(I)=0.
 $$ 
Since  $ \gcd(z^r-1,z^s-1)=z^t-1$ with $t:=gcd(r,s)$, we have
 $$ 
\gcd\big((z^r-1)^m,(z^s-1)^\ell\big)=(z^t-1)^h,
 $$  
where $h:=min\{ m, \ell \}$. 

By Bezout's identity, there exist polynomials $p,q\in\mathbb{C}[z]$ such that
 $$ 
p(z)(z^r-1)^m+q(z)(z^s-1)^\ell=(z^t-1)^h.
 $$  
Evaluating at $A$ and applying the resulting identity to $I$ gives
 $$ 
(A^t-I)^h(I)=0.
 $$  
Since
 $$ 
A^t-I=(L_{T^*}^t-R_{T}^t)R_T^{-t},
 $$  
and $L_{T^*}$, $R_{T}$, and $R^{-1}_{T}$ commute, we obtain
 $$ 
0=(A^t-I)^h(I)
=(L_{T^*}^t-R_{T}^t)^hR_{T}^{-th}(I)
=R_{T}^{-th}(L_{T^*}^t-R_{T}^t)^h(I).
 $$  
So, $ (L_{T^*}^t-R_{T}^t)^h(I)=0$, that is,  $T^t$ is $h$-selfadjoint.
\end{proof}

Invertibility is used in the proof precisely to make the right multiplication operator invertible.

\begin{corollary}
Let $T\in B(H)$ be invertible and let $r,m$ be positive integers. If both $T^r$ and $T^{r+1}$ are $m$-selfadjoint, then $T$ is $m$-selfadjoint.
\end{corollary}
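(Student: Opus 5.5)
This is a direct specialization of Theorem~\ref{powers}. I will apply the theorem with $s:=r+1$ and $\ell:=m$. The hypotheses are then exactly those of the corollary: $T$ is invertible, $T^r$ is $m$-selfadjoint, and $T^{r+1}$ is $m$-selfadjoint.

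It remains to compute the two parameters appearing in the conclusion. Since $r$ and $r+1$ are consecutive integers, any common divisor divides their difference $1$. Hence $t=\gcd(r,r+1)=1$. Also $h=\min\{m,m\}=m$. Theorem~\ref{powers} therefore gives that $T^1=T$ is $m$-selfadjoint, as claimed.

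There is no real obstacle here. The only point worth recording is that invertibility cannot be dropped. Example~\ref{nilcounter} shows this with $r=2$ and $m=1$: there $T^2$ and $T^3$ are selfadjoint but $T$ is not. So the invertibility hypothesis, which the proof of Theorem~\ref{powers} uses to make $R_T$ invertible, is essential to the corollary.

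A remark on the case $r=1$: the statement is then trivially true, since $T^1$ is already assumed $m$-selfadjoint. The general argument covers it anyway.
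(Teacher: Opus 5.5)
Your proposal is correct and follows the paper's intended route: the corollary is exactly Theorem~\ref{powers} with $s=r+1$ and $\ell=m$, so that $t=\gcd(r,r+1)=1$ and $h=m$. Your side remark is also accurate: with $r=2$ and $m=1$, Example~\ref{nilcounter} shows the invertibility hypothesis cannot be dropped.
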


The skew case behaves differently. The next result shows that there is no nontrivial analogue of Theorem~\ref{powers} for skew $m$-selfadjoint operators with two consecutive powers.

\begin{corollary}\label{powers-skew}
Let $T\in B(H)$ and let $k$ be a positive integer. If $T^k$ and $T^{k+1}$ are skew $m$-selfadjoint, then $T$ is a nilpotent operator.
\end{corollary}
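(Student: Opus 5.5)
This is a direct consequence of Proposition~\ref{powers-basic}(iii), taking $\ell=k$ and the second exponent equal to $k+1$; the plan is simply to unpack that argument. First I invoke Proposition~\ref{properties}(i). Since $T^k$ and $T^{k+1}$ are skew $m$-selfadjoint, we get $\sigma(T^k)\subseteq i\R$ and $\sigma(T^{k+1})\subseteq i\R$. By the spectral mapping theorem, every $\lambda\in\sigma(T)$ therefore satisfies $\lambda^k\in i\R$ and $\lambda^{k+1}\in i\R$.

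The key step is showing that these two conditions force $\lambda=0$. Suppose $\lambda\neq 0$. Then $\lambda^k\neq 0$ and $\lambda^{k+1}\neq 0$, so $\lambda=\lambda^{k+1}/\lambda^k$ is a quotient of two nonzero purely imaginary numbers. Such a quotient is real, so $\lambda\in\R\setminus\{0\}$. But then $\lambda^k$ and $\lambda^{k+1}$ are nonzero reals, and a nonzero real number is not in $i\R$, a contradiction. Hence $\sigma(T)=\{0\}$, i.e. $T$ is quasinilpotent. Note that $k$ and $k+1$ always have opposite parity, which is exactly the hypothesis of Proposition~\ref{powers-basic}(iii); the computation above is just the explicit check that works for consecutive powers.

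To upgrade quasinilpotent to nilpotent, I cannot apply Proposition~\ref{properties}(ii) to $T$ directly, since $T$ itself is not known to be $m$-selfadjoint. Instead I apply it to the power $S:=iT^k$, which is $m$-selfadjoint by the definition of skew $m$-selfadjointness. Because $T$ is quasinilpotent, so is $S$, since $\sigma(S)=i\,\sigma(T)^k=\{0\}$. Proposition~\ref{properties}(ii) then shows that $S$ is nilpotent, so $T^{kN}=0$ for some $N$, and hence $T$ is nilpotent.

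The only subtle point is this last transfer. Proposition~\ref{properties}(ii) concerns an $m$-selfadjoint operator, and $T$ may fail to be one, so the nilpotency has to be obtained through the power $T^k$. This is also how the proof of Proposition~\ref{powers-basic}(iii) should be read.
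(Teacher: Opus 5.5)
Your proof is correct and follows the paper's route: the paper just cites Proposition~\ref{powers-basic}(iii), whose proof gets $\sigma(T)=\{0\}$ from the spectral mapping theorem and the opposite parity of the exponents, and then invokes Proposition~\ref{properties}(ii). You make two of those steps precise: your explicit check that a nonzero $\lambda=\lambda^{k+1}/\lambda^k$ would be real, and your application of Proposition~\ref{properties}(ii) to the $m$-selfadjoint operator $iT^k$ rather than to $T$ (which is not known to be $m$-selfadjoint), fill in points the paper's argument leaves terse.
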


\begin{proof}
By  Proposition~\ref{powers-basic} (iii), $T$ is nilpotent. 
\end{proof}

\section{Polynomials of higher order of selfadjoint and skew selfadjoint}

We next consider polynomials. The basic parity phenomenon is already visible for monomials.

An easy example, like $T:= it I$ with $t\in \mathbb R\setminus \{ 0\}$ in any Hilbert space $H$ gives that $T$ is skew selfadjoint and $T^k$ is skew selfadjoint if and only if $k$ is odd. This behavior persists in a more general setting, as it is established by the following result.

\begin{proposition}\label{monomials}
Let $T\in B(H)$ be skew $m$-selfadjoint. Then, for every positive integer $k$,
\begin{enumerate}[label=(\roman*)]
\item if $k$ is odd, then $T^k$ is skew $m$-selfadjoint;
\item if $k$ is even, then $T^k$ is $m$-selfadjoint;
\item if $k$ is even and $T^k$ is skew $m$-selfadjoint, then $T$ is nilpotent.
\end{enumerate}
\end{proposition}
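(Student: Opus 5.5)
Let $S:=iT$. By definition $S$ is $m$-selfadjoint, so Proposition~\ref{properties}(iv) gives that $S^k$ is $m$-selfadjoint for every positive integer $k$. The proof then reduces to tracking the scalar factor $i^k$, together with a small observation about scalar multiples.

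\emph{Scalar multiples.} Let $X$ be $m$-selfadjoint and $c\in\R$. Then
\[
\sum_{j}(-1)^{m-j}\binom{m}{j}(cX)^{*j}(cX)^{m-j}=c^m\sum_{j}(-1)^{m-j}\binom{m}{j}X^{*j}X^{m-j}=0,
\]
so $cX$ is again $m$-selfadjoint. In particular $-X$ is $m$-selfadjoint.

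For (i), let $k$ be odd. Then $i^k=\pm i$, so $S^k=i^kT^k=\pm iT^k$. Hence $iT^k=\pm S^k$, which is $m$-selfadjoint by the observation above. This says exactly that $T^k$ is skew $m$-selfadjoint.

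For (ii), let $k$ be even. Then $i^k=\pm 1$, so $T^k=\pm S^k$, which is $m$-selfadjoint by the same observation.

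For (iii), let $k$ be even with $T^k$ skew $m$-selfadjoint. By (ii), $T^k$ is also $m$-selfadjoint. Remark~\ref{nilremark} then shows that $T^k$ is nilpotent, say $T^{kq}=0$. Consequently $T$ is nilpotent as well. Alternatively, the spectral mapping theorem gives $\sigma(T)^k\subseteq\R\cap i\R=\{0\}$, so $T$ is quasinilpotent. Since $T$ is skew $m$-selfadjoint, $iT$ is $m$-selfadjoint and quasinilpotent, and Proposition~\ref{properties}(ii) shows that $iT$, hence $T$, is nilpotent.

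None of these steps is a real obstacle. The only point needing care is the sign bookkeeping: the sign $(-1)^m$ arising from $-X$ is harmless because the defining sum is set equal to zero.
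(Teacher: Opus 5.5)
Your proposal is correct and follows essentially the same route as the paper: set $S=iT$, apply Proposition~\ref{properties}(iv) to the powers $S^k=i^kT^k$, track the sign, and invoke Remark~\ref{nilremark} for (iii). Your explicit check that real multiples (in particular $-X$) of an $m$-selfadjoint operator remain $m$-selfadjoint fills in a step the paper leaves implicit. Your alternative spectral-mapping argument for (iii) is also valid.
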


\begin{proof}
Since $iT$ is $m$-selfadjoint, Proposition~\ref{properties}(iv) implies that $(iT)^k$ is $m$-selfadjoint. 

{\it (i)} If $k$ is odd, $(iT)^k=\delta  iT^k$ with $\delta \in \{ \pm 1\}$, which yields that $T^k$ is  skew $m$-selfadjoint. 

{\it (ii)} If $k$ is even, $(iT)^k=\delta  T^k$ with $\delta \in \{ \pm 1\}$, so $T^k$ is $m$-selfadjoint.

{\it (iii)} If $k$ is even and $T^k$ is  skew $m$-selfadjoint, then $T^k$ is both $m$-selfadjoint and skew $m$-selfadjoint. By Remark~\ref{nilremark}, $T^k$ is nilpotent, and hence so is $T$.
\end{proof}

\begin{corollary}
Let $T$ be a skew $m$-selfadjoint operator which is not nilpotent. Then $T^k$ is skew $m$-selfadjoint if and only if $k$ is odd.
\end{corollary}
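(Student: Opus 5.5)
The corollary follows directly from Proposition~\ref{monomials}, so the plan is to prove the two implications separately.

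\emph{If $k$ is odd.} Proposition~\ref{monomials}(i) already gives that $T^k$ is skew $m$-selfadjoint. This direction does not use the hypothesis that $T$ is not nilpotent.

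\emph{If $T^k$ is skew $m$-selfadjoint.} I will argue by contraposition. Suppose $k$ is even and $T^k$ is skew $m$-selfadjoint. Proposition~\ref{monomials}(iii) then forces $T$ to be nilpotent, which contradicts the hypothesis. Hence $k$ must be odd.

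For completeness, I recall why (iii) holds. Since $k$ is even, Proposition~\ref{monomials}(ii) shows that $T^k$ is $m$-selfadjoint. So $T^k$ is simultaneously $m$-selfadjoint and skew $m$-selfadjoint. By Remark~\ref{nilremark}, $T^k$ is then nilpotent, and therefore so is $T$.

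There is no real obstacle here: the corollary is just a repackaging of parts (i) and (iii) of Proposition~\ref{monomials}. The only point to state explicitly is that the non-nilpotency hypothesis is used only in the second implication.
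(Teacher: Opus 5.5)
Your proposal is correct and matches the paper's approach. The paper states this corollary without proof, directly after Proposition~\ref{monomials}, as an immediate consequence of parts (i) and (iii), which is exactly how you combine them, with the non-nilpotency hypothesis used only in the converse direction.
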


\begin{proposition}\label{skewpoly}
Let $T\in B(H)$ be skew $m$-selfadjoint and let $p(z)=\sum_{k=0}^{n}a_kz^k$.   
\begin{enumerate}[label=(\roman*)]
\item If $a_k\in
\begin{cases}
i\R,&\text{if $k$ is even}\\
\R,&\text{if $k$ is odd}
\end{cases}$, then $p(T)$ is skew $m$-selfadjoint.
\item If $a_k\in
\begin{cases}
\R,&\text{if $k$ is even}\\
i\R,&\text{if $k$ is odd}
\end{cases}$, then $p(T)$ is $m$-selfadjoint.
\end{enumerate}
\end{proposition}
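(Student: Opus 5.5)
The plan is to reduce both parts to Proposition~\ref{properties}(v) applied to the $m$-selfadjoint operator $S:=iT$. Since $T$ is skew $m$-selfadjoint, $S$ is $m$-selfadjoint. The task is then to rewrite $p(T)$, or $ip(T)$, as a polynomial in $S$ with real coefficients.

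Writing $T=-iS$, we get
$$p(T)=\sum_{k=0}^n a_k(-i)^kS^k.$$
The only computation needed is the following table of powers:
$$(-i)^k\in\R \text{ for } k \text{ even}, \qquad (-i)^k\in i\R \text{ for } k \text{ odd}.$$

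For (ii), assume $a_k\in\R$ for even $k$ and $a_k\in i\R$ for odd $k$. Then each coefficient $b_k:=a_k(-i)^k$ is real in both cases: for even $k$ it is real times real, and for odd $k$ it is imaginary times imaginary. Hence $p(T)=q(S)$ with $q(z)=\sum_k b_kz^k\in\R[z]$. Proposition~\ref{properties}(v) then gives that $p(T)$ is $m$-selfadjoint.

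For (i), assume instead $a_k\in i\R$ for even $k$ and $a_k\in\R$ for odd $k$. Now $b_k=a_k(-i)^k\in i\R$ for every $k$. It follows that
$$ip(T)=\sum_k (ib_k)S^k$$
has real coefficients $ib_k$. By Proposition~\ref{properties}(v), $ip(T)$ is $m$-selfadjoint, which is exactly the statement that $p(T)$ is skew $m$-selfadjoint.

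There is no real obstacle here. The only point needing care is keeping track of the factors of $i$, and in particular remembering that skew $m$-selfadjointness of $p(T)$ means $ip(T)$ is $m$-selfadjoint. Proposition~\ref{monomials} is the special case of this argument for monomials. Alternatively, one could prove the result by adding the monomial statements, but that would require knowing that sums of commuting $m$-selfadjoint operators are $m$-selfadjoint, and the reduction to Proposition~\ref{properties}(v) avoids needing that.
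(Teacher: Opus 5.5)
Your proposal is correct and follows essentially the same route as the paper: set $S:=iT$, rewrite $p(T)$ (resp.\ $ip(T)$) as a polynomial in $S$ with real coefficients $a_k(-i)^k$ (resp.\ $ia_k(-i)^k$), and invoke Proposition~\ref{properties}(v). The paper phrases this through the auxiliary polynomials $q(z)=p(-iz)$ and $q(z)=ip(-iz)$, which is exactly your coefficient computation.
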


\begin{proof}
Let   $S:=iT$. Then $S$ is $m$-selfadjoint. 

{\it (i)} Define   $ q(z):=i\,p(-iz) $.   
If $k$ is even,  $a_k\in i\R$, hence $i a_k(-i)^k\in\R$. If $k$ is odd,  $a_k\in\R$, hence $i a_k(-i)^k\in\R$. Then  $q$ is a polynomial with real coefficients.  Moreover,  $ q(S)=i\,p(-iS)=i\,p(T). $  
By Proposition~\ref{properties}(v), $q(S)$ is $m$-selfadjoint. Therefore $p(T)$ is skew $m$-selfadjoint.

{\it (ii)} Define   $ q(z):=p(-iz) $.   
The coefficient of $z^k$ is $a_k(-i)^k$, which is real for every integer $k$. Thus $q$ has real coefficients and  $ q(S)=p(T) $ and by Proposition~\ref{properties}(v) gives that $p(T)$ is $m$-selfadjoint.
\end{proof}

The above proposition gives sufficient conditions for the general case. An additional hypothesis is required to establish the necessity of the coefficient conditions.

\begin{proposition}\label{Vandermonde}
Let $T\in B(H)$ be $m$-selfadjoint and let $p(z)=\sum_{k=0}^{n}a_kz^k$ be a polynomial of degree $n$. Assume that $\sigma(T)$ contains at least $n+1$ distinct points. Then
\begin{enumerate}[label=(\roman*)]
\item $p(T)$ is $m$-selfadjoint if and only if $a_k\in\R$ for every $k$.
\item $p(T)$ is skew $m$-selfadjoint if and only if $a_k\in i\R$ for every $k$.
\end{enumerate}
\end{proposition}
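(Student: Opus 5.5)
The proof splits into a sufficiency part and a necessity part, and the name of the proposition suggests how necessity will go.

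\emph{Sufficiency.} If all $a_k\in\R$, then $p(T)$ is $m$-selfadjoint by Proposition~\ref{properties}(v). If all $a_k\in i\R$, write $p=i\,q$ with $q$ a real polynomial. Then $q(T)$ is $m$-selfadjoint, and hence $i\,p(T)=-q(T)$ is $m$-selfadjoint. Indeed, the defining identity \eqref{def:mself} is homogeneous of degree $m$, so multiplying the operator by a real scalar preserves $m$-selfadjointness. Therefore $p(T)$ is skew $m$-selfadjoint.

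\emph{Necessity.} We pass to spectra. Suppose $p(T)$ is $m$-selfadjoint. By Proposition~\ref{properties}(i), $\sigma(p(T))\subseteq\R$. By the spectral mapping theorem, $p(\sigma(T))=\sigma(p(T))\subseteq\R$. Again by Proposition~\ref{properties}(i), $\sigma(T)\subseteq\R$. Now pick $n+1$ distinct points $\lambda_0,\dots,\lambda_n\in\sigma(T)\subseteq\R$ and set $b_j:=p(\lambda_j)\in\R$. The coefficient vector $(a_0,\dots,a_n)$ solves the Vandermonde system $V a=b$, where $V=(\lambda_j^k)$ is real and invertible because the nodes are distinct. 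Hence $a=V^{-1}b$ is real. An equivalent way to say this: the polynomial $\overline{p}(z)=\sum\overline{a_k}z^k$ agrees with $p$ at the $n+1$ real points $\lambda_j$, since $\overline{p(\lambda_j)}=p(\lambda_j)$. Two polynomials of degree at most $n$ that agree at $n+1$ points coincide, so $a_k=\overline{a_k}$ for every $k$.

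For (ii), suppose $p(T)$ is skew $m$-selfadjoint. Then $i\,p(T)$ is $m$-selfadjoint, so $\sigma(i\,p(T))\subseteq\R$. Applying the same interpolation argument to the polynomial $ip$ at the real nodes $\lambda_j$ shows that $ip$ has real coefficients, that is, $a_k\in i\R$ for every $k$.

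The only real obstacle is making sure the interpolation nodes are real, so that complex conjugation fixes the Vandermonde matrix. This is exactly where the standing hypothesis that $T$ is $m$-selfadjoint enters, through Proposition~\ref{properties}(i). The hypothesis that $\sigma(T)$ has at least $n+1$ points is what allows us to recover all $n+1$ coefficients. Without it the argument fails, as with a finite spectrum: for example, $p(z)=z+i(z^2-z)$ applied to a projection gives a selfadjoint operator even though $p$ has non-real coefficients.
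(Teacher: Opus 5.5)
Your proof is correct and essentially the paper's: sufficiency comes from Proposition~\ref{properties}(v); necessity comes from choosing $n+1$ distinct real points of $\sigma(T)$, using $p(\lambda_j)\in\sigma(p(T))\subseteq\R$ and inverting the Vandermonde system; and (ii) reduces to (i) for a scalar multiple of $p$. The paper uses $-ip$ there, which gives both directions of (ii) at once. Your projection example $p(z)=z+i(z^2-z)$ is a valid extra illustration that the cardinality hypothesis cannot be dropped.
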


\begin{proof}
{\it (i)} The sufficiency  follows from Proposition~\ref{properties}(v). For necessity, choose distinct
 $ 
\lambda_0,\ldots,\lambda_n\in\sigma(T)\subseteq\R.
 $  
If $p(T)$ is $m$-selfadjoint, then  $ p(\lambda_j)\in\sigma(p(T))\subseteq\R$, for $ j=0,\ldots,n $.   
Thus
 $$ 
\begin{pmatrix}
1&\lambda_0&\cdots&\lambda_0^n\\
\vdots&\vdots&&\vdots\\
1&\lambda_n&\cdots&\lambda_n^n
\end{pmatrix}
\begin{pmatrix}a_0\\\vdots\\a_n\end{pmatrix}
=
\begin{pmatrix}p(\lambda_0)\\\vdots\\p(\lambda_n)\end{pmatrix}\in\R^{n+1}.
 $$  
Since the Vandermonde matrix is invertible because its nodes are distinct, then  $a_k\in\R$ for all $k$.

{\it (ii)} $p(T)$ is skew $m$-selfadjoint  when $-ip(T)$ is $m$-selfadjoint. Apply (i) to the polynomial $-ip$ and we get the result.
\end{proof}

\begin{proposition}\label{Vandermonde-skew}
Let $T\in B(H)$ be skew $m$-selfadjoint and let $p(z)=\sum_{k=0}^{n}a_kz^k$ be a polynomial of degree $n$. Assume that $\sigma(T)$ contains at least $n+1$ distinct points. Then
\begin{enumerate}[label=(\roman*)]
\item $p(T)$ is skew $m$-selfadjoint if and only if
 $ 
a_k\in
\begin{cases}
i\R,&\text{if $k$ is even}\\
\R,&\text{if $k$ is odd}.
\end{cases}
 $  
\item $p(T)$ is $m$-selfadjoint if and only if
 $ 
a_k\in
\begin{cases}
\R,&\text{if  $k$ is even}\\
i\R,&\text{if $k$ is  odd}.
\end{cases}
 $  
\end{enumerate}
\end{proposition}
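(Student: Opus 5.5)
The plan is to reduce to Proposition~\ref{Vandermonde} via the substitution $S:=iT$. By hypothesis $S$ is $m$-selfadjoint, and $\sigma(S)=i\sigma(T)$ has at least $n+1$ distinct points. Set $q(z):=p(-iz)=\sum_{k=0}^n a_k(-i)^k z^k$. This is a polynomial of degree $n$, since $a_n(-i)^n\neq 0$, and $q(S)=p(T)$. The coefficients of $q$ are $b_k:=a_k(-i)^k$.

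For (ii), $p(T)$ is $m$-selfadjoint if and only if $q(S)$ is $m$-selfadjoint. By Proposition~\ref{Vandermonde}(i) applied to $S$ and $q$, this happens exactly when every $b_k\in\R$. Now $(-i)^k$ is $\pm1$ for even $k$ and $\pm i$ for odd $k$. Hence $b_k\in\R$ is equivalent to $a_k\in\R$ when $k$ is even and to $a_k\in i\R$ when $k$ is odd, which is the claimed condition.

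For (i), $p(T)$ is skew $m$-selfadjoint if and only if $q(S)$ is skew $m$-selfadjoint. By Proposition~\ref{Vandermonde}(ii) this holds exactly when every $b_k\in i\R$. This translates to $a_k\in i\R$ for $k$ even and $a_k\in\R$ for $k$ odd.

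The sufficiency directions are also covered by Proposition~\ref{skewpoly}, but the reduction above gives both directions at once. The only points needing care are routine: checking that $\deg q=n$, and that the $n+1$ distinct spectral points of $T$ become $n+1$ distinct points of $\sigma(S)$ under the spectral mapping $\lambda\mapsto i\lambda$. I expect no real obstacle.
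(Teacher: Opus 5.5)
Your argument is correct and is essentially the paper's: you substitute $S=iT$, rewrite $p(T)$ as a polynomial of the same degree in the $m$-selfadjoint operator $S$, and invoke Proposition~\ref{Vandermonde}. The only difference is bookkeeping: the paper uses $q(z)=-i\,p(-iz)$ so that part (i) reduces to the real-coefficient criterion of Proposition~\ref{Vandermonde}(i) (its stated coefficient $a_k(-1)^{k+1}$ should read $a_k(-i)^{k+1}$), whereas you keep $q(z)=p(-iz)$ for both parts and use Proposition~\ref{Vandermonde}(ii) for the skew case, which is equally valid.
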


\begin{proof}
Let  $S:=iT$. Then $S$ is $m$-selfadjoint and $\sigma(S)=i\sigma(T)$. 

{\it (i)} Define  $ q(z):=-i\,p(-iz) $.   
A direct computation gives $q(iT)=-ip(T)$ and the coefficient of $z^k$ is $a_k (-1)^{k+1}$, By Proposition~\ref{Vandermonde} applied to $q$ and $S=iT$ gives the stated coefficient conditions. 

{\it (ii)} It follows similarly, or by applying (i) to the polynomial $ip$.
\end{proof}

Note that in the above proposition, to prove the sufficient condition for $p(T)$ to be $m$-selfadjoint (or skew $m$-selfadjoint), the hypothesis on the number of $\sigma(T)$ is not needed.

The following example proves that the spectral cardinality hypothesis in Propositions~\ref{Vandermonde} and \ref{Vandermonde-skew} cannot be omitted.

\begin{example}
 {\rm Let $m\ge2$ and let $T$ be a nonzero nilpotent operator of order at most $\left[\frac{m+1}{2}\right]$. Then $T$ is $m$-selfadjoint. For the polynomial $p(z):=iz$, the operator $p(T)=iT$ is also $m$-selfadjoint, although the coefficient $i$ is not real.}
\end{example}

\section{Holomorphic functional calculus of higher-order selfadjoint and skew selfadjoint}

By  Proposition \ref{properties} (iii), we have that $T$ is skew $m$-selfadjoint if and only if $\Phi(T)$ is an invertible $m$-isometry, where $\Phi(z)=\frac{z+1}{z-1}$ is the Cayley transform. 
Indeed, we can say a similar result for the class of $ m$-isometries. 

 \begin{corollary}
     Let $T\in B(H)$. Then $T$ is an invertible $m$-isometry if and only if $\Phi(T)$ is skew $m$-selfadjoint, where $\Phi(z)=\frac{z+1}{z-1}$. 
 \end{corollary}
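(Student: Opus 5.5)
The plan is to reduce the statement to Proposition~\ref{properties}(iii), using the fact that the Cayley transform $\Phi(z)=\frac{z+1}{z-1}$ is an involution on $\mathbb C\setminus\{1\}$. As in Proposition~\ref{properties}(iii), we read the statement with the standing assumption that $\Phi(T)$ is defined, i.e.\ $1\notin\sigma(T)$, so that $T-I$ is invertible. This assumption is genuinely needed: a unitary operator with $1$ in its spectrum is an invertible $1$-isometry for which $\Phi(T)$ makes no sense.

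First, set $S:=\Phi(T)=(T+I)(T-I)^{-1}$. Since $T+I=(T-I)+2I$, we have
$$
S=I+2(T-I)^{-1},\qquad S-I=2(T-I)^{-1},
$$
so $S-I$ is invertible and $1\notin\sigma(S)$. Hence $\Phi(S)=(S+I)(S-I)^{-1}$ is well defined.

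Next, we compute $\Phi(S)$. All operators involved are rational functions of $T$ and therefore commute, so
$$
\Phi(S)=\bigl(2I+2(T-I)^{-1}\bigr)\cdot\tfrac12 (T-I)=(T-I)+I=T.
$$
This is the operator version of the scalar identity $\Phi(\Phi(z))=z$. One can also obtain it from the rational functional calculus, since $z\mapsto\Phi(\Phi(z))$ is the identity on $\mathbb C\setminus\{1\}$.

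Finally, we apply Proposition~\ref{properties}(iii) to the operator $S$, whose Cayley transform is defined. It says that $S$ is skew $m$-selfadjoint if and only if $\Phi(S)$ is an invertible $m$-isometry. Since $\Phi(S)=T$, this reads: $\Phi(T)$ is skew $m$-selfadjoint if and only if $T$ is an invertible $m$-isometry, which is the claim.

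The only delicate point is the domain issue, namely making sure that both $\Phi(T)$ and $\Phi(\Phi(T))$ exist. This is handled by the identity $S-I=2(T-I)^{-1}$. Everything else is a formal consequence of the involutive property of $\Phi$ and the commutativity of the operators involved.
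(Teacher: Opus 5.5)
Your proposal is correct and follows the paper's argument: apply Proposition~\ref{properties}(iii) to $S=\Phi(T)$ and use that $\Phi$ is an involution, so that $\Phi(S)=T$. You also spell out two points the paper leaves tacit: the identity $\Phi(\Phi(T))=T$, obtained via $S-I=2(T-I)^{-1}$, and the standing assumption $1\notin\sigma(T)$ needed for $\Phi(T)$ to be defined.
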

\begin{proof}
    It is immediate by  Proposition \ref{properties} (iii) and the fact that the inverse of $\Phi $ is equal to $\Phi $. 
\end{proof}
In the following proposition, we include some results for the functional calculus of $m$-selfadjoint.  

\begin{proposition}
Let $T\in B(H)$. 
\begin{enumerate}[label=(\roman*)]
 \item \cite[Proposition 3.2]{sh} Let $T$ be an  $m$-selfadjoint operator and $f$ be an analytic function in a neighborhood of $\sigma (T)$,  with $G$ a disk. If $f(G\cap \sigma (T))\subset \mathbb{R}$, then  $f(T)$ is an $m$-selfadjoint. 
		\item  \cite{mr} If $T$ is 3-selfadjoint, then $e^{isT} $ is 3-isometry.   
\end{enumerate}
\end{proposition}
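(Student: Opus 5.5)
Both items are quoted from the literature. The plan is to recover each from facts already listed in Proposition~\ref{properties}, reading the hypothesis in (i) as: $G$ is an open disk centred on $\R$ with $\sigma(T)\subseteq G$, $f$ is holomorphic on $G$, and $f$ is real on $G\cap\R$.

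For (i), the key steps are these.
\begin{enumerate}[label=(\alph*)]
\item Let $c\in\R$ be the centre of $G$. Since $f$ is real on the real interval $G\cap\R$, every derivative $f^{(k)}(c)$ is real. Hence the Taylor coefficients of $f$ at $c$ are real.
\item Let $p_N$ be the $N$-th Taylor partial sum. It is a polynomial with real coefficients, so $p_N(T)$ is $m$-selfadjoint by Proposition~\ref{properties}(v).
\item Choose a smaller concentric disk that still contains $\sigma(T)$. On it $p_N\to f$ uniformly, so $p_N(T)\to f(T)$ in norm, by continuity of the Riesz--Dunford calculus.
\item The map $X\mapsto\sum_{k}(-1)^{m-k}\binom{m}{k}X^{*k}X^{m-k}$ is norm continuous on $B(H)$. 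The class of $m$-selfadjoint operators is therefore norm closed, so $f(T)$ is $m$-selfadjoint.
\end{enumerate}
The main obstacle is the interpretation of the hypothesis. If one only knows $f$ is real on the set $\sigma(T)$ itself, step (a) still works when $\sigma(T)$ has an accumulation point, by the identity theorem applied to $f-\overline{f(\bar z)}$. When $\sigma(T)$ is finite it fails: $T=A+Q$ as in \cite{sh}, and $f(T)$ involves derivatives $f^{(j)}(\lambda)$ that need not be real. So I would state and use the disk/real-axis reading.

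For (ii), I would prove the more general fact that if $T$ is $m$-selfadjoint then $S:=e^{isT}$ is an $m$-isometry for real $s$. The steps are as follows.
\begin{enumerate}[label=(\alph*)]
\item Put $g(u):=e^{-iuT^*}e^{iuT}$ for $u\in\R$. Then $S^{*k}S^k=g(sk)$, because $(e^{isT})^*=e^{-isT^*}$.
\item Differentiate: $g'(u)=-i\,e^{-iuT^*}(T^*-T)e^{iuT}$. The middle factor $T^*-T$ comes from $L_{T^*}-R_T$ applied to $I$, as in the proof of Theorem~\ref{powers}.
\item By induction, $g^{(n)}(u)=(-i)^n e^{-iuT^*}\,(L_{T^*}-R_T)^n(I)\,e^{iuT}$.
\item Since $(L_{T^*}-R_T)^m(I)=0$, we get $g^{(m)}\equiv 0$. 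So $g$ is an operator-valued polynomial in $u$ of degree at most $m-1$, and hence so is $k\mapsto S^{*k}S^k$.
\item The $m$-th forward difference of a polynomial of degree $\le m-1$ vanishes. This gives $\sum_{k=0}^m(-1)^{m-k}\binom mk S^{*k}S^k=0$, i.e.\ $S$ is an $m$-isometry.
\end{enumerate}
Taking $m=3$ gives the statement. The only delicate point is the derivative formula in step (c), which is routine since $L_{T^*}$ and $R_T$ commute.
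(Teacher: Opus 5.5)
The paper gives no proof of this proposition; both items are quoted from \cite{sh} and \cite{mr}. Your proposal is therefore a self-contained replacement rather than a reconstruction, and it is correct.

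In (i), you read the hypothesis as a disk centred on $\R$ with $f$ real on $G\cap\R$. Under that reading, the route ``real Taylor coefficients, then Proposition~\ref{properties}(v), then norm-closedness of the class'' is clean and uses only facts already listed.

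In (ii), your computation amounts to $g(u)=e^{-iuD}(I)$ with $D=L_{T^*}-R_T$. Then $D^m(I)=0$ makes $k\mapsto S^{*k}S^k$ a polynomial of degree less than $m$, and the $m$-th difference kills it. This proves more than is stated: $e^{isT}$ is an $m$-isometry for every $m$-selfadjoint $T$ and every real $s$. That is the general fact the paper later invokes from \cite{SH-E} in its theorem on $m$-isometries, whereas the statement here only records $m=3$.

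One correction to your discussion of the hypothesis. Under the literal reading ($f$ real only on $\sigma(T)$), the finite-spectrum case does not fail. Only your step (a) fails there, and the non-real derivatives are harmless. Write $T=A+Q$. Then $Q$ commutes with the spectral projections of $A$, since these are polynomials in $A$. This gives an orthogonal decomposition $T=\bigoplus_j(x_jI+Q_j)$, with each $Q_j$ nilpotent and $m$-selfadjoint. By Proposition~\ref{properties}(ii), $Q_j^q=0$ for $q=[\frac{m+1}{2}]$.

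Next, $f(x_jI+Q_j)=f(x_j)I+N_j$ with $N_j=\sum_{k=1}^{q-1}\frac{f^{(k)}(x_j)}{k!}Q_j^k$, so $N_j^q=0$. Any $N$ with $N^q=0$ is $(2q-1)$-selfadjoint: in each term $N^{*k}N^{2q-1-k}$, either $k\ge q$ or $2q-1-k\ge q$. Since $2q-1\le m$ and $f(x_j)\in\R$, every summand is $m$-selfadjoint, and hence so is $f(T)$. This is exactly the argument of Theorem~\ref{theorem}(i).

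Combined with your identity-theorem remark for infinite spectrum, this gives the statement under the literal reading as well, without strengthening the hypothesis. If the disk is not centred on $\R$, replace the Taylor polynomials by Runge approximants $p_n$ of $f$ on a compact convex symmetric neighbourhood of $\sigma(T)$ inside $G\cap\{\bar z: z\in G\}$. Then use $\frac12(p_n(z)+\overline{p_n(\bar z)})$, which has real coefficients.
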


The basic spectral principle is that, for an $m$-selfadjoint operator, the relevant real structure is encoded by the spectrum.

\begin{theorem}\label{theorem} Let $T \in B(H)$ be an $m$-selfadjoint operator (a skew $m$-selfadjoint operator) and  $f$ be an analytic function  in an open set $G$  contains $\sigma(T)$.
 Then  
 \begin{enumerate}[label=(\roman*)]
\item  If $\sigma(T)$ is a finite set then $f(T)$ is $m$-selfadjoint (skew $m$-selfadjoint) if and only if $f(\sigma(T)) \subseteq \mathbb{R}$ $ (f(\sigma(T)) \subseteq i\mathbb{R})$.
\item  If $G$ is an open and connected set of $\mathbb C$,  then $f(T)$ is $m$-selfadjoint (skew $m$-selfadjoint) if and only if $f(\sigma(T)) \subseteq \mathbb{R}$ $(f(\sigma(T)) \subseteq i\mathbb{R})$.
\item   If $G$ is an arbitrary open set, then $f(T)$ is similar to a finite direct sum of $m$-selfadjoint (skew $m$-selfadjoint) operators on a Banach space if and only if $f(\sigma(T)) \subseteq \mathbb{R}$ $(f(\sigma(T)) \subseteq i\mathbb{R})$.
\end{enumerate}
\end{theorem}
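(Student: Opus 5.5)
Necessity in all three parts rests on the spectral mapping theorem. If $f(T)$ is $m$-selfadjoint, then Proposition~\ref{properties}(i) gives $\sigma(f(T))\subseteq\R$. Since $\sigma(f(T))=f(\sigma(T))$, we get $f(\sigma(T))\subseteq\R$.

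For (iii), the space splits into a finite direct sum of Banach spaces, and the spectrum of a direct sum is the union of the spectra. The operators in question are $m$-selfadjoint with respect to some inner product structure, and the spectral inclusion in $\R$ holds for them (Stankus' argument, or simply similarity invariance of the spectrum). So again $f(\sigma(T))\subseteq\R$.

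The skew case reduces to the selfadjoint one. Put $S=iT$ and $g(z)=-i f(-iz)$, or $g(z)=f(-iz)$, depending on which target class is wanted. Then I apply the selfadjoint statement. This works because $f(T)$ is skew $m$-selfadjoint exactly when $-if(T)$ is $m$-selfadjoint.

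For sufficiency in (i), I use \cite[Corollary 2.4]{sh}: $T=A+Q$ with $A$ selfadjoint, $Q$ nilpotent, and $AQ=QA$. Since $\sigma(T)=\{\lambda_1,\dots,\lambda_N\}$ is finite, the Riesz idempotents $P_j$ split $H=\bigoplus H_j$, with $T|_{H_j}=\lambda_j+Q_j$ and $Q_j$ nilpotent. Because $A$ is selfadjoint and commutes with $Q$, the $P_j$ are spectral projections of $A$, hence orthogonal. Then
\[
f(T)|_{H_j}=\sum_{k<q}\frac{f^{(k)}(\lambda_j)}{k!}Q_j^k .
\]
The issue is that the derivatives $f^{(k)}(\lambda_j)$ need not be real. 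I would handle this by interpolation: choose a real polynomial $p$ that matches the jets of $f$ at the $\lambda_j$ to high order. The obstacle is that the jets of $f$ are complex. The fix is that Remark~\ref{nilremark} says $Q_j$ is already $m$-selfadjoint and skew $m$-selfadjoint. Better still, I compare with the finite-dimensional theory: $f(T)=f(A)+N$ with $N=\sum_j\sum_{k\ge1}\frac{f^{(k)}(\lambda_j)}{k!}Q_j^k$. Here $f(A)$ is selfadjoint because $f(\lambda_j)\in\R$, and $N$ is nilpotent and commutes with $f(A)$. The key step, and the main obstacle, is to show that the nilpotent part has order at most $[(m+1)/2]$, which forces $\sum_k (-1)^{m-k}\binom mk f(T)^{*k}f(T)^{m-k}=0$. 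I would try to prove the identity
\[
\sum_{k=0}^m (-1)^{m-k}\binom mk (B+N)^{*k}(B+N)^{m-k}
\]
for $B$ selfadjoint and $N$ nilpotent commuting with $B$ (and with $B^*$, since $B$ is selfadjoint), expanding it in $N$ and $N^*$. Commutation makes the expression equal to $(L_{B+N^*}-R_{B+N})^m(I)=(L_{N^*}-R_N)^m(I)$, and this vanishes once $N^q=0$ with $2q-1\le m$. The order of $N$ is at most the order of the $Q_j$, which is at most $[(m+1)/2]$ by Proposition~\ref{properties}(ii) applied to $T-\lambda_j$ on $H_j$. That last application uses that restrictions of $m$-selfadjoint operators to orthogonally reducing subspaces remain $m$-selfadjoint.

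For (ii), $G$ is connected and $f(\sigma(T))\subseteq\R$. If $\sigma(T)$ is finite, we are in case (i). Otherwise $\sigma(T)\subseteq\R$ is infinite, so it has an accumulation point in $G$. The function $g(z)=f(z)-\overline{f(\bar z)}$ is analytic on $G\cap\bar G$ and vanishes on $\sigma(T)$. I would use the identity principle on the component of $G\cap\bar G$ containing that accumulation point to get $f(\bar z)=\overline{f(z)}$ there. Then $f$ is real on an interval, and Proposition 3.2 of \cite{sh} (the disk result) applies locally, as does $f(T)^*=\bar f(T^*)$. I expect the delicate point to be passing from local realness to a global statement on the connected set $G$, which would follow by approximating $f$ by real-coefficient rational or polynomial functions via Runge and invoking Proposition~\ref{properties}(v).

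For (iii), I decompose $\sigma(T)$ according to the components $G_i$ of $G$ that meet it. The Riesz idempotents give a (non-orthogonal) direct sum $T\cong\bigoplus T_i$. I then apply (ii) on each piece, which accounts for the similarity and the Banach-space setting.
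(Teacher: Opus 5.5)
Your necessity arguments, the skew reduction, and part (i) are sound. Part (i) follows the paper's route: an orthogonal splitting into blocks $\lambda_j+Q_j$, then ``real selfadjoint part plus commuting nilpotent of order $q\le[\frac{m+1}{2}]$ is $m$-selfadjoint''. Your computation $(L_{B+N^*}-R_{B+N})^m(I)=(L_{N^*}-R_N)^m(I)$ replaces the paper's citation of \cite[Corollary 3.1]{sh}.

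The gap is in (ii), at exactly the step you flagged. Put $\tilde f(z):=\overline{f(\bar z)}$ and $G^\ast:=\{\bar z: z\in G\}$. The identity theorem gives $f=\tilde f$ only on the component of $G\cap G^\ast$ containing the accumulation point. Your globalization, Runge approximation by real-coefficient polynomials or rational functions, cannot work in general, because any such limit forces $f=\tilde f$ near every point of $\sigma(T)$. Here is a concrete counterexample to that mechanism:
\begin{itemize}
\item $G=\{z:1<|z|<3,\ \operatorname{Im}z>-\tfrac12\}$, which is connected while $G\cap G^\ast$ has two components;
\item $f(z)=(\log z-\ln 2)^2$, with the branch $\arg z\in(-\tfrac{\pi}{2},\tfrac{3\pi}{2})$;
\item $T=T_1\oplus(-2I+J)$, with $T_1$ selfadjoint, $\sigma(T_1)=[\tfrac32,\tfrac52]$, and $J$ the $2\times 2$ nilpotent Jordan block.
\end{itemize}
Then $T$ is $3$-selfadjoint and $f(\sigma(T))\subseteq\R$, since $f(-2)=-\pi^2$. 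But $f'(-2)=-i\pi$, whereas every real $q$ gives $q(-2I+J)=q(-2)I+q'(-2)J$ with $q'(-2)\in\R$. So no sequence $q_n(T)$ converges to $f(T)$. Nevertheless $f(T)=f(T_1)\oplus(-\pi^2I-i\pi J)$ is $3$-selfadjoint, so the statement survives, just not by your mechanism.

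The paper's own proof of (ii) passes over this point by taking $G$ symmetric ``without loss of generality'' and applying the identity theorem on all of it. In this example no connected symmetric open subset of $G$ contains $\sigma(T)$, so your suspicion is justified. Likewise, invoking the disk result ``locally'' presupposes a splitting of $T$ that you have not produced.

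The missing ingredient is an orthogonal splitting that lets part (i) absorb the bad points.
\begin{itemize}
\item Only finitely many components of $G\cap G^\ast$ meet $\sigma(T)$.
\item On those containing infinitely many points of $\sigma(T)$ you get $f=\tilde f$. There the symmetrized Runge argument works: take $q_n=\tfrac12(p_n+\tilde p_n)$ on a finite union of closed disks centered on $\R$, and use closedness of the class.
\item The remaining components contain only a finite set $F$ of isolated points of $\sigma(T)$.
\end{itemize}
The Riesz decomposition $T=T'\oplus T_F$ along $\sigma(T)\setminus F$ and $F$ is orthogonal. To see this, fix an isolated $\lambda\in\sigma(T)$ and put $S:=T-\lambda$, which is $m$-selfadjoint.
\begin{itemize}
\item The Riesz subspace of $\lambda$ lies in $\ker S^q$ for some $q$. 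Indeed, restrictions to invariant subspaces remain $m$-selfadjoint, so Proposition~\ref{properties}(ii) applies.
\item $S$ is invertible on the complementary Riesz subspace $M'$.
\item Pair $\sum_{j=0}^{m}(-1)^{m-j}\binom{m}{j}S^{*j}S^{m-j}=0$ with $y\in M'$ and $x\in\ker S^q$, and induct on $q$. Every term with $j\ge1$ vanishes, leaving $\langle S^m y,x\rangle=0$, whence $x\perp S^mM'=M'$.
\end{itemize}
Applying this to each $\lambda\in F$ shows that both pieces reduce $T$. Then $f(T_F)$ is $m$-selfadjoint by (i), $f(T')$ is $m$-selfadjoint by the real-approximation argument, and therefore so is $f(T)=f(T')\oplus f(T_F)$. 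Your (iii) needs the same repair: each $G_i$ is connected, but $G_i\cap G_i^\ast$ need not be.
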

\begin{proof} First  suppose that $T$ is an $m$-selfadjoint operator.  In (i) and (ii), the direct implication follows from the spectral mapping theorem, so we prove the other side.

{\it (i)} Suppose that $T$ is quasinilpotent;  so it is nilpotent of order less than or equal to $k_0:=[\frac{m+1}{2}]$. If $f(z):= \sum_{n=0}^\infty a_n z^n$, then $a_0 \in \mathbb{R}$ and $f(T)= a_0I + S$ where $S:= \sum_{n=1}^{\infty} a_n T^n=\sum_{n=1}^{k_{0}-1} a_n T^n$ is a nilpotent operator of order less than or equal to $[\frac{m+1}{2}]$. So by  \cite[Corollary 3.1]{sh} $f(T)$ is $(2[\frac{m+1}{2}] -1)$-selfadjoint, consequently, is $m$-selfadjoint. Next, suppose that $\sigma(T)= \{x\},$ with $x \not = 0$. Again  \cite[Corollary 3.1]{sh} or \cite[Proposition 2.1]{mr},  implies that $R:= T-xI$ is $m$-selfadjoint. Since $R$ is also quasinilpotent, it is a nilpotent operator of order less than or equal to $[\frac{m+1}{2}]$. On the other hand, $g(z):= f(z+x)$ is analytic about $0$ so by the above argument $g(R)=f(T)$ is $m$-selfadjoint. Finally, suppose that $\sigma(T)= \{x_1, ..., x_n\}\subseteq \mathbb{R}$. By   \cite[Corollary 3.5]{SH-E} $T$ is the direct sum of $n$ operators as $T_j:= x_j + M_j$ where $M_j$ is a nilpotent operator of order less than or equal to $[\frac{m+1}{2}]$. An application of the previous part shows that $f(T)= \oplus_{j=1}^nf(T_j)$ is  $m$-selfadjoint. 

{\it (ii)} Suppose that $\sigma(T)$ contains an infinite subset of $\mathbb{R}$. So it has a limit point. One can assume that  $G$ contains $\sigma(T)$ is symmetric with respect to real axis, and $f$ is analytic on $G$. Now let $g(z):=\overline{f(\overline{z})}$. Then $g$ is analytic on $G$ and $\sigma(T) \subset \mathbb{R} \subseteq \{z: f(z)=g(z)\}.$ Thus the zero set of analytic function $f(z)-g(z)$ has a limit point on $G$: hence $f(z)=g(z)$ for all $z \in G$. Now,  \cite[Proposition 3.1]{sh} shows that $f(T)$ is $m$-selfadjoint. 

{\it (iii)} One implication follows from the spectral mapping theorem, that similar operators have the same spectrum, and the spectrum of finite direct sums of operators is the union of the spectrum of their factors. Now, suppose that $f(\sigma(T)) \subset  \mathbb{R}$. The compactness of $\sigma(T)$ guarantees the existance of pairewise disjoint regions $G_1,..., G_n,$ with $\sigma(T) \subseteq \cup _{j=1}^n G_j$ such that each one contains at least one element of $\sigma(T)$, and  $\cup _{j=1}^n G_j$ has connected complement. Consequently, by the Riesz decomposition theorem one can write $H= \oplus _{j=1}^n M_j$, where each $M_j$ is a closed $T$-invariant subspace. Note that this direct sum is an algebraic direct sum, which is in general, not an orthonormal direct sum. For $h= \oplus _{j=1}^n h_j \in \oplus _{j=1}^n M_j$, let $|||h||| := \sum _{j=1}^n ||h_j||$. This norm makes $ \oplus _{j=1}^n M_j$ into a Banach space. Moreover $S: (H, ||.||) \longrightarrow (\oplus _{j=1}^n M_j, |||.|||)$ defined by $Sh:= \oplus _{j=1}^n h_j$ is an invertible operator such that $ST= (\oplus _{j=1}^n T_j)S$ where $T_j= T|_{M_j}$. Note that the boundedness of $S$ follows from the boundedness of $S^{-1}$ and inverse mapping theorem. Therefore it follows that $STS^{-1}=\oplus _{j=1} ^n T_j$. On the other hand, by Runge's Theorem there is a sequence of polynomials $(p_k)_k$ such that $p_k(z) \longrightarrow f(z)$ uniformly on compact subsets of $\cup _{j=1}^n G_j$. Hence 
$$Sf(T)S^{-1} = \lim _{k \to \infty}Sp_k(T)S^{-1} =\lim _{k \to \infty} \oplus _{j=1}^n p_k(T_j) = \oplus _{j=1}^n f(T_j).$$
By the hypothesis, $T$ is $m$-selfadjoint and so is $T_j$. Consequently, an application of (ii) implies that
 $f(T_j)$ is $m$-selfadjoint for all $j$. So the proof is completed when $T$ is an $m$-selfadjoint operator. 
 
 To prove the theorem for skew $m$-selfadjoint operators replace $T$ by $-iT$ and $f(z)$ by $if(iz)$.
\end{proof}

\begin{corollary}Let $T \in B(H)$ be an $m$-selfadjoint operator (a skew $m$-selfadjoint operator) and  $f$ be an  analytic function in an open set $G$ that  contains $\sigma(T)$. Then 
 \begin{enumerate}[label=(\roman*)]
\item  If $\sigma(T)$ is a finite set then $f(T)$ is skew $m$-selfadjoint ($m$-selfadjoint) if and only if $f(\sigma(T)) \subseteq i\mathbb{R}$ $(f(\sigma(T)) \subseteq \mathbb{R})$.
\item  If $G$ is an open and connected set of $\mathbb C$,  then $f(T)$ is skew $m$-selfadjoint ($m$-selfadjoint) if and only if $f(\sigma(T)) \subseteq i\mathbb{R}$ $(f(\sigma(T)) \subseteq \mathbb{R})$.
\item   If $G$ is an arbitrary open set then $f(T)$ is similar to a finite direct sum of skew $m$-selfadjoint ($m$-selfadjoint) operators on a Banach space if and only if $f(\sigma(T)) \subseteq i\mathbb{R}$ $(f(\sigma(T)) \subseteq \mathbb{R})$.
\end{enumerate}
\end{corollary}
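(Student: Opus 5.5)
The corollary is Theorem~\ref{theorem} with the roles of $\R$ and $i\R$ interchanged. I would obtain it by replacing $f$ with a rotated function $g=\pm i f$, which is analytic on the same set $G$. This rotation does not change the domain of analyticity, so each of the three hypotheses on $G$ or $\sigma(T)$ carries over unchanged. The guiding fact is the trivial equivalence, straight from the definition, that an operator $X$ is skew $m$-selfadjoint if and only if $iX$ is $m$-selfadjoint. Equivalently, $X$ is $m$-selfadjoint if and only if $iX$ (or $-iX$) is skew $m$-selfadjoint.

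Take first $T$ $m$-selfadjoint and set $g:=if$, which is analytic on $G\supseteq\sigma(T)$. By uniqueness and linearity of the holomorphic functional calculus, $g(T)=if(T)$. Hence $f(T)$ is skew $m$-selfadjoint if and only if $g(T)$ is $m$-selfadjoint. On the spectral side, $f(\sigma(T))\subseteq i\R$ if and only if $g(\sigma(T))=i f(\sigma(T))\subseteq\R$. Applying Theorem~\ref{theorem}(i) and (ii) to $g$ gives items (i) and (ii).

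For (iii), I use that similarity commutes with scalar multiplication: $Sf(T)S^{-1}=\oplus_j X_j$ if and only if $Sg(T)S^{-1}=\oplus_j iX_j$. Moreover, the $X_j$ are skew $m$-selfadjoint exactly when the $iX_j$ are $m$-selfadjoint. This notion makes sense on the Banach-space summands via the same algebraic identity, with adjoints taken as in the theorem. So Theorem~\ref{theorem}(iii) for $g$ transfers to $f$.

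For $T$ skew $m$-selfadjoint, I proceed symmetrically with $g:=-if$. Then $f(T)$ is $m$-selfadjoint if and only if $-if(T)=g(T)$ is skew $m$-selfadjoint, by the equivalence above. Also $f(\sigma(T))\subseteq\R$ if and only if $g(\sigma(T))\subseteq i\R$. The skew version of Theorem~\ref{theorem} then applies to $g$ verbatim.

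There is no substantive obstacle. The only points to check are that $g(T)=\pm i f(T)$ in the holomorphic calculus, which is immediate, and that in (iii) the meaning of ``skew $m$-selfadjoint on a Banach space'' used in Theorem~\ref{theorem}(iii) is stable under multiplication by $i$. This stability holds because the defining condition is the same polynomial identity, rescaled.
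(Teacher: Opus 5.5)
Your proposal is correct and is essentially the paper's argument: for $m$-selfadjoint $T$ the paper also just replaces $f$ by $if$. For skew $m$-selfadjoint $T$ the paper instead rotates the operator, replacing $T$ by $-iT$ and $f(z)$ by $f(iz)$ to land in the $m$-selfadjoint case of Theorem~\ref{theorem}, whereas you rotate the function to $-if$ and invoke the skew version of the theorem directly; this is an inessential variation.
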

\begin{proof} Replace $f(z)$ by $if(z)$ when $T$ is an $m$-selfadjoint operator and when $T$ is a skew $m$-selfadjoint operator replace $T$ by $-iT$ and $f(z)$ by $f(iz)$ in the proof of the above theorem.
\end{proof}


\begin{example}
 {\rm    Let $T\in B(H)$ and let $f(z):= e^z$, $g(z):=\sin (z)$ and $ h(z):=\cos (z)$ be functions. Then 
    \begin{enumerate}
        \item If $T$ is an $m$-selfadjoint, then $f(T)$, $g(T)$ and $h(T)$ are $m$-selfadjoint. 
        \item If $T$ is a skew $m$-selfadjoint, then $if(iT)$, $ig(iT)$ and $ih(iT)$ are skew $m$-selfadjoint. 
        \item If $T$ is skew $m$-selfadjoint, then $f(iT) $ , $g(iT)$ and $h(iT)$ are $m$-selfadjoint. 
    \end{enumerate}}
\end{example}

Recall that $f(z):= \frac{az+b}{cz+d}$ is a M\"{o}bius transformation if $a, b, c, d$ are complex numbers satisfying $ad-bc \not = 0$. 

\begin{example}
{\rm Suppose that  $T$ is an $m$-selfadjoint such that $\sigma (T) \subseteq \{\alpha _1, \alpha _2\} $, where $\alpha _1\neq \alpha _2$. If  $f(z):=\frac{az+b}{cz+d}$ is a M\"{o}bius transformation analytic in an open set of $\sigma (T)$, then  by  Theorem \ref{theorem} (i),  $f(T)$ is $m$-selfadjoint (skew $m$-selfadjoint) if and only if $f(\alpha _k) \in \mathbb{R}$ ($f(\alpha _k)\in i\mathbb{R})$ for $k=1,2$. }
\end{example}

Notice that if $T$ is an $m$-selfadjoint such that the spectrum has more than two points, then the information of $f$ at two points is not enough to guarantee that $f(T)$ is an $m$-selfadjoint or a skew $m$-selfadjoint. Just take $f(z):= \frac{(1+i)z}{iz+1}$ and $T$ be an $m$-selfadjoint such that $\{ 0, 1, 2\} \subseteq \sigma (T)$.  

In the case that  $\sigma(T)$ has more than two elements, we obtain the following corollary.

\begin{corollary}Let $T \in B(H)$ be an $m$-selfadjoint operator (a skew $m$-selfadjoint operator) where $\sigma(T)$ contains more than two points and $f(z)= \frac{az+b}{cz+d}$ be a M\"{o}bius transformation that is analytic on $\sigma(T)$. Then $f(T)$ is $m$-selfadjoint (a skew $m$-selfadjoint operator) if and only if $\frac{b}{d}, \frac{a}{c}$ and $\frac{d}{c}$ are in $\mathbb{R}_{\infty}:= \mathbb{R} \cup \{\infty \}$ ($\frac{b}{d}, \frac{a}{c}$ and $\frac{d}{c}$ are in $i\mathbb{R}_{\infty}:= i\mathbb{R} \cup \{\infty \}$).
\end{corollary}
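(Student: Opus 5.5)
The plan is to reduce the corollary to Theorem~\ref{theorem}(ii), and then to a statement about Möbius maps: a Möbius map sending three distinct real points to real values preserves $\mathbb{R}_\infty$. A Möbius transformation is analytic on the connected open set $G:=\mathbb C\setminus\{-d/c\}$ (or on $\mathbb C$ when $c=0$). This $G$ contains $\sigma(T)$, because $f$ is analytic on $\sigma(T)$. Theorem~\ref{theorem}(ii) therefore says that $f(T)$ is $m$-selfadjoint if and only if $f(\sigma(T))\subseteq\R$. It remains to show that this holds exactly when $b/d,\ a/c,\ d/c\in\R_\infty$, with the conventions $x/0=\infty$ and $0/0$ excluded, since $ad-bc\neq0$.

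\emph{Sufficiency.} We have $f(0)=b/d$, $f(\infty)=a/c$ and $f^{-1}(\infty)=-d/c$. Suppose these all lie in $\R_\infty$. A Möbius map is determined by three values, and here the three distinct points $0,\infty,-d/c$ are mapped into $\R_\infty$. When $-d/c\neq0,\infty$, this reads $f(0),f(\infty)\in\R$ and $f(-d/c)=\infty$. One checks directly that the coefficients can then be rescaled to be real: write $f(z)=\frac{a/c\, z+b/c}{z+d/c}$, and note that $b/c=(b/d)(d/c)\in\R$. Degenerate cases such as $c=0$ or $d=0$ are handled by the same rescaling, dividing by whichever of $c,d$ is nonzero. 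With real coefficients, $f$ maps $\R\setminus\{-d/c\}\supseteq\sigma(T)$ into $\R$, and Theorem~\ref{theorem}(ii) applies.

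\emph{Necessity.} Suppose $f(\sigma(T))\subseteq\R$. Choose three distinct points $x_1,x_2,x_3\in\sigma(T)\subseteq\R$, using Proposition~\ref{properties}(i); their values $f(x_j)$ are real and distinct, by injectivity. Let $g$ be the unique Möbius map with real coefficients (the Lagrange-type cross-ratio interpolant) taking $x_j\mapsto f(x_j)$. Since a Möbius map is determined by the images of three points, $f=g$, so $f$ has real coefficients up to a common complex scalar. Hence $b/d$, $a/c$ and $d/c$, being ratios of coefficients, lie in $\R_\infty$.

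\emph{Skew case.} Apply the above to $-iT$ and the map $z\mapsto -i f(iz)=\frac{-ia\cdot iz-ib}{c\cdot iz+d}$, and track how the ratios change under this substitution; this yields the $i\R_\infty$ conditions. The main obstacle is this bookkeeping. In particular, I expect $d/c$ to transform into $-id/c$ while $a/c$ and $b/d$ pick up factors of $i$ in ways that need not match the stated condition, so the precise form of the skew statement must be checked carefully, possibly after reinterpreting the conditions. A secondary point is to organise the cases $c=0$ and $d=0$ cleanly using the $\infty$ convention.
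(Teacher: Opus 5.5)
Your route is the paper's own: Theorem~\ref{theorem}(ii) reduces the question to $f(\sigma(T))\subseteq\R$, and three-point rigidity of M\"obius maps turns this into $f(\R_\infty)=\R_\infty$. Your necessity argument is correct. Sufficiency is also correct when $cd\neq 0$: then all three ratios are finite, and $b/c=(b/d)(d/c)$ makes every coefficient real after dividing by $c$.

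The genuine gap is the sentence claiming that the cases $c=0$ or $d=0$ are ``handled by the same rescaling''. The three points $0,\infty,-d/c$ are distinct only when $cd\neq0$; otherwise the hypothesis constrains only two of them. If $c=0$, then $a/c=d/c=\infty$ automatically, so the hypothesis says only $b/d\in\R$. But $f(z)=(a/d)z+b/d$, and $a/d$ is left unconstrained.

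Concretely, take $f(z)=iz$, so $a=i$, $b=0$, $c=0$, $d=1$. Then $b/d=0$ and $a/c=d/c=\infty$. For a selfadjoint $T$ with $\sigma(T)=\{0,1,2\}$, the operator $f(T)=iT$ has spectrum $\{0,i,2i\}\not\subseteq\R$, so it is not $m$-selfadjoint for any $m$. Similarly, $f(z)=i/z$ (here $d=0$, so $b/d=\infty$ and $a/c=d/c=0$) fails for $\sigma(T)=\{1,2,3\}$.

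So the ``if'' direction of the statement is false in the degenerate cases, and no argument can close this step. The paper's one-line proof asserts the same equivalence and shares the defect. What is true, and what your argument proves, is the corollary under the extra assumption $cd\neq0$. Alternatively, replace the ratio condition by ``$(a,b,c,d)$ is a complex multiple of a real quadruple'', equivalently $a\bar c,\ b\bar d,\ a\bar d+b\bar c\in\R$.

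On the other hand, the skew-case bookkeeping you single out as the main obstacle is harmless. Since $-if(iw)=\frac{aw-ib}{icw+d}$, each of $b/d$, $a/c$, $d/c$ is multiplied by the same factor $-i$. Hence the $\R_\infty$ conditions for the new map are exactly the stated $i\R_\infty$ conditions for $f$. The same degenerate-case caveat applies here too. Take $f(z)=iz$ and $T$ skew-selfadjoint with $\sigma(T)=\{0,i,2i\}$. Then $b/d=0\in i\R$ and $a/c=d/c=\infty$, yet $\sigma(f(T))=\{0,-1,-2\}\not\subseteq i\R$.
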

\begin{proof} Since $\sigma(T)$ contains more than two points
  by Theorem \ref{theorem}, $f(T)$ is an $m$-selfadjoint operator (a skew $m$-selfadjoint operator) if and only if $f(\mathbb{R}) \subseteq \mathbb{R}$ $(f(i\mathbb{R}) \subseteq i\mathbb{R})$ which is equivalent to $\frac{b}{d}, \frac{a}{c}$ and $\frac{d}{c}$ are in $\mathbb{R}_{\infty} \;(\frac{b}{d}, \frac{a}{c}$ and $\frac{d}{c}$ are in $i\mathbb{R}_{\infty}$).  
\end{proof}

\begin{corollary}Let $T \in B(H)$ be an $m$-selfadjoint operator (a skew $m$-selfadjoint operator) which $\sigma(T)$ contains more than two points and $f(z):= \frac{az+b}{cz+d}$ be a M\"{o}bius transformation that is analytic on $\sigma(T)$. Then $f(T)$ is skew $m$-selfadjoint ($m$-selfadjoint) if and only if $\frac{b}{d}, \frac{a}{c}$ are in $i\mathbb{R}_{\infty}$ and $\frac{d}{c}$ is in $\mathbb{R}_{\infty}$ $ (\frac{b}{d}, \frac{a}{c}$ are in $\mathbb{R}_{\infty}$ and $\frac{d}{c}$ is in $i\mathbb{R}_{\infty}$).
\end{corollary}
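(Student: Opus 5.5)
The plan is to reduce to the previous corollary by the same device the paper uses between Theorem~\ref{theorem} and its corollary: multiplying $f$ by $i$ interchanges the two classes. Since $f(T)$ is skew $m$-selfadjoint exactly when $g(T):=-if(T)$ is $m$-selfadjoint, we can work with $g$. Here
$$g(z)=\frac{-iaz-ib}{cz+d},$$
which is again a M\"obius transformation. Its determinant is $(-ia)d-(-ib)c=-i(ad-bc)\neq0$, and its pole set, hence its domain of analyticity, is the same as that of $f$.

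\emph{Case $T$ is $m$-selfadjoint.} Since $\sigma(T)$ has more than two points, the preceding corollary applies to $g$. It says $g(T)$ is $m$-selfadjoint if and only if the three quotients $\frac{-ib}{d}$, $\frac{-ia}{c}$ and $\frac{d}{c}$ lie in $\R_\infty$. The first two conditions are equivalent to $\frac{b}{d},\frac{a}{c}\in i\R_\infty$, since multiplication by $-i$ maps $\R_\infty$ onto $i\R_\infty$ and fixes $\infty$. The third condition is unchanged. This is exactly the claimed criterion.

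\emph{Case $T$ is skew $m$-selfadjoint.} Here the same corollary applies to $g$ in its skew form. It says $f(T)$ is $m$-selfadjoint if and only if $g(T)=-if(T)$ is skew $m$-selfadjoint. In turn, $g(T)$ is skew $m$-selfadjoint if and only if $\frac{-ib}{d},\frac{-ia}{c},\frac{d}{c}\in i\R_\infty$. This is equivalent to $\frac{b}{d},\frac{a}{c}\in\R_\infty$ and $\frac{d}{c}\in i\R_\infty$, as stated.

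The main obstacle is bookkeeping of degenerate cases: $c=0$ or $d=0$, where a quotient is $\infty$, and the requirement that the previous corollary's hypotheses still hold for $g$. The hypotheses are fine because $g$ is a nondegenerate M\"obius map analytic on $\sigma(T)$. The degenerate cases are harmless because scaling $a,b$ by $-i$ preserves which quotients equal $0$ or $\infty$. If one distrusts the previous corollary, an alternative is to run it directly. By Theorem~\ref{theorem}(ii), applied on the connected domain $\mathbb C\setminus\{-d/c\}$ (minus a point, hence connected), $f(T)$ is skew $m$-selfadjoint if and only if $f(\sigma(T))\subseteq i\R$. Since $\sigma(T)$ contains at least three real points and a M\"obius map is determined by three points, this holds if and only if $f(\R_\infty)=i\R_\infty$. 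One then reads off the coefficient conditions.
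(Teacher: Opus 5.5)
Your bookkeeping in the reduction $g=-if$ is correct, and when $c\neq 0\neq d$ it yields a valid proof. Your ``alternative'' is essentially the paper's route: the paper argues directly that $f(\R)\subseteq i\R$ is equivalent to the quotient conditions, implicitly via Theorem~\ref{theorem}(ii) and the three-point argument.

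The genuine gap is your claim that the degenerate cases are harmless. The three quotients record $f(0)$, $f(\infty)$ and $f^{-1}(\infty)=-d/c$. When $c=0$ or $d=0$, the test points $0$, $\infty$, $-d/c$ collapse to two, so the conditions no longer determine $f(\R_\infty)$. In fact the stated equivalence is false there:
\begin{itemize}
\item For $f(z)=z$ ($a=d=1$, $b=c=0$) one has $\frac bd=0\in i\R$ and $\frac ac=\frac dc=\infty$, so the criterion holds. Yet $f(T)=T$ is not skew $m$-selfadjoint, because $\sigma(T)\subseteq\R$ has more than two points and hence contains a nonzero real number.
\item Likewise $f(z)=1/z$ ($a=d=0$, $b=c=1$) satisfies the criterion, while $T^{-1}$ has nonzero real spectrum.
\end{itemize}
The same two maps also break the parenthetical skew version.

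Your main route imports this defect from the previous corollary, which already fails for $f(z)=iz$: its quotients $0,\infty,\infty$ lie in $\R_\infty$, but $iT$ is not $m$-selfadjoint. In your alternative, the step ``one then reads off the coefficient conditions'' is exactly where the argument breaks.

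The paper's one-line proof rests on the same claimed equivalence, so no argument can close this gap for the statement as written. What your argument does prove is the corollary under the extra hypothesis $cd\neq 0$. Alternatively, the criterion can be repaired by adding $\frac ad\in i\R$ when $c=0$ and $\frac bc\in i\R$ when $d=0$, in both versions. These are the conditions one reads off from requiring that $-if$, respectively $z\mapsto f(iz)$, admit a representation with real coefficients.
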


\begin{proof} It follows from the fact that $f(\mathbb{R}) \subseteq i\mathbb{R}$ $(f(i\mathbb{R}) \subseteq \mathbb{R}$) if and only if $\frac{b}{d}, \frac{a}{c}$ are in $i\mathbb{R}_{\infty}$ and $\frac{d}{c}$ is in $\mathbb{R}_{\infty} (\frac{b}{d}, \frac{a}{c}$ are in $\mathbb{R}_{\infty}$ and $\frac{d}{c}$ is in $i\mathbb{R}_{\infty}$).
\end{proof}

Now, the purpose is similar to  Proposition \ref{properties} (iii). That is, if we fix an $m$-selfadjoint  or skew $m$-selfadjoint operator, we obtain sufficient conditions such that $f(T)$ is an $m$-isometry for a suitable analytic function $f$  in a neighborhood of $\sigma (T)$.

\begin{theorem}\label{theorem 1} Let $T \in B(H)$ be an $m$-selfadjoint operator  or a skew $m$-selfadjoint operator.  Suppose that  $f$ is analytic in an open set $G$  containing $\sigma(T)$. Then the following holds  
 \begin{enumerate}[label=(\roman*)]
\item  If $\sigma(T)$ is a finite set, then $f(T)$ is $m$-isometry if and only if $f(\sigma(T)) \subseteq \mathbb{T}$ where $\mathbb{T}$ denotes the unit circle.
\item  If $G$ is a simply connected region, then $f(T)$ is $m$-isometry if and only if $f(\sigma(T)) \subseteq \mathbb{T}$.
\item   If $G$ is an arbitrary open set, then $f(T)$ is similar to a finite direct sum of $m$-isometric operators on a Banach space if and only if $f(\sigma(T)) \subseteq \mathbb{T}$.
\end{enumerate}
\end{theorem}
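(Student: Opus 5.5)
Reduce everything to Theorem~\ref{theorem} through the Cayley transform $\Phi(z)=\frac{z+1}{z-1}$ and Proposition~\ref{properties}(iii), together with its corollary: an operator $S$ is an invertible $m$-isometry if and only if $\Phi(S)$ is skew $m$-selfadjoint. By replacing $T$ with $iT$ where needed (which changes $f$ to $z\mapsto f(-iz)$, again analytic on the rotated domain), it is enough to treat the case where $T$ is $m$-selfadjoint.

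For the necessity in (i) and (ii), note that an $m$-isometry $S$ is bounded below, so $\sigma(S)$ lies in the closed unit disc. If $f(T)$ is an $m$-isometry, then $f(\sigma(T))=\sigma(f(T))$ by the spectral mapping theorem. Since $\sigma(T)\subseteq\R$ has empty interior, $f(\sigma(T))$ should lie on $\mathbb T$. To make this rigorous, I would use that the boundary of $\sigma(f(T))$ consists of approximate eigenvalues, so it lies on $\mathbb T$. Then I would argue that $\sigma(f(T))$ itself lies on $\mathbb T$:
\begin{itemize}
\item when $\sigma(T)$ is finite, $\sigma(f(T))$ is finite, hence equal to its boundary;
\item when $G$ is a region, I would try to show $f(T)$ is invertible, for instance via $\Phi$, or use that an image of a subset of $\R$ with empty interior cannot contain an open disc unless $f$ is degenerate.
\end{itemize}
This step is subtle, and I would first try the first bullet's argument, reducing via invertibility. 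Given invertibility, $\Phi(f(T))$ is skew $m$-selfadjoint, so its spectrum lies in $i\R$, and hence $f(\sigma(T))\subseteq\Phi^{-1}(i\R)=\mathbb T$. Invertibility is also needed to avoid the point $1\in\sigma(f(T))$, where $\Phi$ has its pole.

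For the sufficiency, assume $f(\sigma(T))\subseteq\mathbb T$. The idea is to set $g:=\Phi\circ f$, which is analytic near $\sigma(T)$ as long as $1\notin f(\sigma(T))$, and satisfies $g(\sigma(T))\subseteq i\R$. Theorem~\ref{theorem} (i) or (ii), in its skew form, then gives that $g(T)$ is skew $m$-selfadjoint. Hence $f(T)=\Phi(g(T))$ is an invertible $m$-isometry by Proposition~\ref{properties}(iii).

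The main obstacle is the case $1\in f(\sigma(T))$. I would handle it by rotation: pick $\theta$ with $e^{i\theta}\notin f(\sigma(T))$, which is possible since $f(\sigma(T))$ is compact and, in case (ii), a real-analytic image that cannot cover $\mathbb T$ except in degenerate cases. Then I would apply the argument to $e^{-i\theta}f$, using that $S$ is an $m$-isometry if and only if $e^{i\theta}S$ is. In case (ii) there is a second issue: $\Phi\circ f$ is only analytic on $G\setminus f^{-1}(1)$, which may fail to be connected. I would shrink $G$ to a simply connected neighbourhood of $\sigma(T)$ avoiding $f^{-1}(e^{i\theta})$. Alternatively, I would use the identity-theorem argument from Theorem~\ref{theorem}(ii) directly, applied to $f(z)\overline{f(\bar z)}=1$ on $G$, and then use the symmetric functional calculus to obtain $f(T)^*=f(T)^{-1}$-type hereditary identities.

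For (iii), I would repeat the Riesz decomposition and Runge argument from Theorem~\ref{theorem}(iii): write $STS^{-1}=\oplus T_j$ with each $G_j$ a simply connected region, apply (ii) to each $f(T_j)$, and conclude that $Sf(T)S^{-1}=\oplus f(T_j)$ is a finite direct sum of $m$-isometries. The converse follows from the spectral mapping theorem together with the necessity argument above applied to each summand.
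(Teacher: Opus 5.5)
Your proposal has a genuine gap in the sufficiency direction of (ii), and therefore also of (iii). The Cayley-transform argument needs a point $e^{i\theta}\in\mathbb T\setminus f(\sigma(T))$. Your claim that an analytic image of $\sigma(T)\subseteq\R$ covers $\mathbb T$ only in degenerate cases is false. Take $T$ selfadjoint with $\sigma(T)=[0,2\pi]$ and $f(z)=e^{iz}$ (entire, and of the same type as the example $c^{iz}$ after the theorem); then $f(\sigma(T))=\mathbb T$. No other rotation or M\"obius map repairs this: $\Phi$ never takes the value $1$ on $\mathbb C$, so $e^{i\theta}\,\Phi\circ g$, with $g$ analytic near $\sigma(T)$, never takes the value $e^{i\theta}$, whereas $f$ takes every value of $\mathbb T$ on $\sigma(T)$.

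A telltale sign is that your argument never genuinely uses that $G$ is simply connected. The missing idea is the logarithm, which is exactly where the paper uses that hypothesis:
\begin{enumerate}[label=(\alph*)]
\item shrink $G$ to a simply connected region containing $\sigma(T)$ on which $f$ has no zeros, and write $f=e^{g}$ there;
\item note that $|f|=1$ on $\sigma(T)$ forces $g(\sigma(T))\subseteq i\R$, so $g(T)$ is skew $m$-selfadjoint by Theorem~\ref{theorem}(ii);
\item conclude that $f(T)=e^{g(T)}$ is an $m$-isometry by \cite[Theorem~2.1]{SH-E}.
\end{enumerate}
Your fallback via $f(z)\overline{f(\bar z)}=1$ points to a workable route, but as written it is not an argument. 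The identity $f(T)^*=f(T)^{-1}$ would make $f(T)$ unitary, which is false in general. What is actually needed is a two-variable hereditary functional calculus (as in \cite[Lemma~1.16]{A1}, used in the paper's last proposition), applied to $\bigl(\tilde f(y)f(x)-1\bigr)^m=(y-x)^m k(x,y)^m f(x)^m$, where $\tilde f(z)=\overline{f(\bar z)}$ and $\tilde f(y)-\tilde f(x)=(y-x)k(x,y)$. You do not supply this. Since the pieces $T_j$ in (iii) can again satisfy $f(\sigma(T_j))=\mathbb T$, the same gap carries over to (iii).

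The rest is sound or easily fixed.
\begin{itemize}
\item In (i) your route works: $f(\sigma(T))$ is finite, so a suitable rotation exists, and Theorem~\ref{theorem}(i) (via its corollary for the skew target) needs only an open set containing $\sigma(T)$. This is a genuinely different route from the paper's, which splits $T$ into pieces $x_j+M_j$ with $M_j$ nilpotent and applies Gu--Stankus to $f(x_j)I+(\text{nilpotent})$.
\item Your worry that $G\setminus f^{-1}(1)$ may be disconnected is unfounded: a region minus a closed discrete set is still a region. Moreover, Theorem~\ref{theorem}(ii) needs only connectedness, not simple connectivity.
\item For necessity you need neither invertibility nor $\Phi$. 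Incidentally, invertibility excludes $0$ from the spectrum, not the pole $1$. Since $f$ is locally Lipschitz, the image of the compact set $\sigma(T)\subseteq\R$ has zero area and hence empty interior. Therefore $\sigma(f(T))=\partial\sigma(f(T))\subseteq\sigma_{\mathrm{ap}}(f(T))\subseteq\mathbb T$ in all cases.
\end{itemize}
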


\begin{proof} The proofs of (i) and (iii) are similar to the proofs of (i) and (iii) of Theorem \ref{theorem}, respectively. 

{\it (i)} Assume that $T$ is quasinilpotent and $m$-selfadjoint. Let $f(z):= \sum _{n=0}^\infty a_n z^n$. By  \cite[Theorem 4]{Gu-S} $f(T)=a_0 I+S$ is an $m$-isometry where $|a_0|=1$ and $S$ is a nilpotent operator of order less than or equal to $[\frac{m+1}{2}]$. The other cases are similar to the proof of Theorem \ref{theorem}.  

{\it (ii)} One implication follows from the spectral mapping theorem. To prove the other side, suppose that $f(\sigma(T)) \subseteq \mathbb{T}$. One can assume that $f(z)$ is nonvanishing on $G$. Hence there is an analytic function $g: G \longrightarrow \mathbb{C}$ such that $f(z)= \exp g(z)$,  \cite[Theorem 2.2 Page 202]{C2}. Consequently,  $g(\sigma(T)) \subseteq i\mathbb{R}$ and so by part (ii) of Theorem \ref{theorem}, $g(T)$ is a skew $m$-selfadjoint operator. Thus, by   \cite[Theorem 2.1]{SH-E}, $f(T)= \exp g(T)$ is an $m$-isometry. 

{\it (iii)} Recall that $iT$ is an $m$-isometry when $T$ is.
\end{proof}

\begin{example}{\rm Suppose that $T \in B(H)$ is an $m$-selfadjoint operator and $c$ is a nonzero real number.  If $f(z) = c^{iz}$ then an application of part $(ii)$ of Theorem \ref{theorem 1} implies that $f(T)$ is an $m$-isometry. Similarly, if $S$ is a skew $m$-selfadjoint operator then $c^ S$ is an $m$-isometry. }
\end{example}

\begin{corollary}Let $T \in B(H)$ be an $m$-selfadjoint operator (a skew $m$-selfadjoint operator) which $\sigma(T)$ contains more than two points and $f(z):= \frac{az+b}{cz+d}$ be a M\"{o}bius transformation that is analytic on $\sigma(T)$. Then $f(T)$ is $m$-isometry if and only if $|a|=|c|, |b|=|d|$ and Re$(a\overline{b})=$ Re$(c \overline{d})$ $ (|a|=|c|, |b|=|d|$ and Im$(a\overline{b})=$ Im$(c \overline{d})).$
\end{corollary}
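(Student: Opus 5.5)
The plan is to reduce the statement to Theorem~\ref{theorem 1} and then turn the condition $f(\sigma(T))\subseteq\mathbb{T}$ into a condition on the coefficients. Since $\sigma(T)$ has at least three points, I first want the equivalence
\[
f(\sigma(T))\subseteq\mathbb{T}\iff f(\mathbb{R})\subseteq\mathbb{T}\cup\{\infty\}
\]
in the $m$-selfadjoint case, with $i\mathbb{R}$ in place of $\mathbb{R}$ in the skew case. The M\"obius map $f$ is analytic on a neighbourhood of $\sigma(T)$. Shrinking that neighbourhood to a simply connected region is not automatic, since $\sigma(T)$ may be spread along the line. I will therefore argue directly instead. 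A M\"obius transformation sends the generalized circle $\mathbb{R}_\infty$ onto a generalized circle. A generalized circle is determined by any three of its points. Hence if three distinct real points of $\sigma(T)$ are mapped into $\mathbb{T}$, then $f(\mathbb{R}_\infty)=\mathbb{T}$, and conversely.

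For the operator-theoretic direction I use Theorem~\ref{theorem 1}. The forward implication is the spectral mapping theorem: the spectrum of an $m$-isometry lies in $\mathbb{T}$ when it is invertible, and in the closed disc in general. Here $f(T)$ is invertible whenever $f(\sigma(T))\subseteq\mathbb{T}$, because $0\notin f(\sigma(T))$. I expect the main obstacle to be extracting $f(\sigma(T))\subseteq\mathbb{T}$ in the non-invertible case. I would first try to rule that case out. If $f(T)$ is an $m$-isometry and $\sigma(T)$ is real, then $\sigma(f(T))=f(\sigma(T))$ lies on the circle or line $f(\mathbb{R}_\infty)$. The spectrum of an $m$-isometry is either contained in $\mathbb{T}$ or is the whole closed disc, and a subset of a circle or line cannot be the full disc. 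So the spectrum lies in $\mathbb{T}$.

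For the converse, write $f=\exp g$ on a suitable neighbourhood and apply Theorem~\ref{theorem 1}(ii). Alternatively, and more cleanly, note that $f(\mathbb{R}_\infty)=\mathbb{T}$ means $f=\Phi_0\circ h$, where $\Phi_0$ is a fixed Cayley-type map sending $\mathbb{R}_\infty$ onto $\mathbb{T}$ and $h$ is a real M\"obius map. Then $h(T)$ is $m$-selfadjoint by the previous corollary on M\"obius maps, and $\Phi_0$ composed with it gives an $m$-isometry by Proposition~\ref{properties}(iii) after rotating by $i$.

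It remains to do the algebra, which is routine. The condition $|f(x)|=1$ for all real $x$ means $|ax+b|^2=|cx+d|^2$ for every $x\in\mathbb{R}$. Comparing coefficients of $x^2$, $x$ and $1$ gives
\[
|a|=|c|,\qquad \operatorname{Re}(a\overline b)=\operatorname{Re}(c\overline d),\qquad |b|=|d|.
\]
In the skew case, put $x=iy$ with $y$ real. Then $|aiy+b|^2=|a|^2y^2+2y\operatorname{Re}(ia\overline b)+|b|^2$, and $\operatorname{Re}(ia\overline b)=-\operatorname{Im}(a\overline b)$. This turns the middle condition into $\operatorname{Im}(a\overline b)=\operatorname{Im}(c\overline d)$. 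The condition $ad-bc\neq0$ guarantees that $f$ is non-constant, so the three-point rigidity argument applies.
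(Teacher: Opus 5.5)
Your proposal is correct and follows essentially the paper's route: reduce via Theorem~\ref{theorem 1} to $f(\sigma(T))\subseteq\mathbb{T}$, use three distinct points of $\sigma(T)$ to force $f(\mathbb{R}_\infty)=\mathbb{T}$ (resp.\ $f(i\mathbb{R}_\infty)=\mathbb{T}$), and compare coefficients in $|ax+b|^2=|cx+d|^2$; you simply make explicit some details that the paper's one-line proof leaves implicit, such as excluding the non-invertible case. One caveat concerns your ``cleaner'' alternative: with a fixed $\Phi_0$, the map $h=\Phi_0^{-1}\circ f$ can have its pole in $\sigma(T)$ (e.g.\ $\Phi_0(w)=\frac{w+i}{w-i}$ and $f=-\Phi_0$ give $h(z)=-1/z$), so you should normalize $\Phi_0(\infty)=f(\infty)$, which makes $h$ a real affine map; your main route via $f=\exp g$ on a thin strip around the line is unaffected.
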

\begin{proof} It follows from the fact that $f(\mathbb{R}) \subseteq \mathbb{T}$ $(f(i\mathbb{R}) \subseteq \mathbb{T}$) if and only if $|a|=|c|, |b|=|d|$ and Re$(a\overline{b})=$ Re$(c \overline{d})$ $ (|a|=|c|, |b|=|d|$ and Im$(a\overline{b})=$ Im$(c \overline{d})).$
\end{proof}

Recall that if $G$ is an open set in $\mathbb{C}$ such that $G=\{z \in G: \overline{z} \in G\}$ and $f$ is an analytic function on $G$ then $\tilde{f}(z):= \overline{f(\overline{z})},$ is also analytic on $G$.

\begin{proposition} Let  $T\in B(H)$ be an an $m$-selfadjoint operator and $f$ is an analytic function on $\sigma(T)$ such that $
f(z) \not = 0$ for all $z\in \sigma(T)$. Then  $f(T^*)^*(f(T))^{-1}$ is an $m$-isometry.
\end{proposition}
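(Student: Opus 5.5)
The plan is to write the operator $f(T^*)^*(f(T))^{-1}$ as $h(T)$ for a single analytic function $h$ of modulus one on $\sigma(T)$. I would then express $h$ as an exponential of a function whose value at $T$ is skew $m$-selfadjoint, and finish with \cite[Theorem 2.1]{SH-E}, exactly as in the proof of Theorem~\ref{theorem 1}(ii).

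\emph{Step 1 (reduction to a symmetric domain).} By Proposition~\ref{properties}(i), $\sigma(T)\subseteq\R$, hence $\sigma(T^*)=\overline{\sigma(T)}=\sigma(T)$. Since $\sigma(T)$ is a compact subset of $\R$ and $f$ is analytic and nonvanishing near it, I choose an open set $G\supseteq\sigma(T)$ with the following properties:
\begin{enumerate}[label=(\alph*)]
\item $G$ is a finite union of pairwise disjoint open rectangles, each symmetric with respect to the real axis and each meeting $\sigma(T)$;
\item $f$ is analytic and has no zeros on a neighborhood of $\overline{G}$.
\end{enumerate}
Then $G=\{z:\overline z\in G\}$, each component of $G$ is simply connected, and $\tilde f(z)=\overline{f(\overline z)}$ is analytic and nonvanishing on $G$.

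\emph{Step 2 (identify $f(T^*)^*$).} Take the Riesz--Dunford integral $f(T^*)=\frac1{2\pi i}\oint_\Gamma f(\lambda)(\lambda-T^*)^{-1}\,d\lambda$ over a contour $\Gamma\subset G$ that is symmetric under conjugation. Taking adjoints and substituting $\lambda\mapsto\overline\lambda$ (the orientation reversal cancels the conjugated $i$) gives
$$
f(T^*)^*=\tilde f(T).
$$
Consequently $f(T^*)^*(f(T))^{-1}=\tilde f(T)\,(1/f)(T)=h(T)$, where $h:=\tilde f/f$. This step is routine bookkeeping, but it has to be done carefully.

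\emph{Step 3 (logarithm and symmetry).} On each simply connected component of $G$, the function $f$ has an analytic logarithm \cite[Theorem 2.2 Page 202]{C2}. This gives $k$ analytic on $G$ with $f=\exp k$, and then $\tilde f=\exp\tilde k$. Put $g:=\tilde k-k$, so that $h=\exp g$ and $\tilde g=-g$. For $u:=ig$ we get
$$
\tilde u(z)=\overline{i\,g(\overline z)}=-i\,\tilde g(z)=i\,g(z)=u(z).
$$
So $u$ is real-symmetric on the symmetric open set $G$. By \cite[Proposition 3.1]{sh}, which is the fact used in Theorem~\ref{theorem}(ii), $u(T)$ is $m$-selfadjoint. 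Since $g(T)=-i\,u(T)$ and $i\,g(T)=u(T)$, the operator $g(T)$ is skew $m$-selfadjoint.

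\emph{Step 4 (conclusion).} By \cite[Theorem 2.1]{SH-E}, $\exp(g(T))$ is an $m$-isometry. The functional calculus gives $\exp(g(T))=h(T)=f(T^*)^*(f(T))^{-1}$, which proves the claim.

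The main obstacle is Step 3 when $G$ is disconnected. I must make sure that \cite[Proposition 3.1]{sh} only requires $u=\tilde u$ on a conjugation-symmetric open neighborhood, not connectedness. If it does require connectedness, I would instead check the $m$-selfadjoint identity for $u(T)$ directly. I would approximate $u$ on $G$ by Runge with polynomials $p_k$, and replace each $p_k$ by $(p_k+\tilde p_k)/2$, which has real coefficients and still converges to $u$ because $u=\tilde u$. By Proposition~\ref{properties}(v), each $\frac12(p_k+\tilde p_k)(T)$ is $m$-selfadjoint. The defining identity \eqref{def:mself} is continuous in norm, so it passes to the limit and $u(T)$ is $m$-selfadjoint.
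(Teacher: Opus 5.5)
Your proof is correct, but it takes a different route from the paper's.

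\textbf{The paper's route.} The paper never uses logarithms. It approximates $f$ by polynomials $p_n$ (Runge, on a union of discs centred on $\R$). For each polynomial $p$ nonvanishing on $\sigma(T)$, it shows that $r(T)=p(T^*)^*p(T)^{-1}$ is an $m$-isometry. The key step is an explicit algebraic identity: the hereditary function $\overline{p}^{\,m}(\overline{r}r-1)^m p^m$ is a sum of terms $p_{i,j}(\overline z)(\overline z-z)^m q_{i,j}(z)$. Substituting $T^*$ and $T$ via Agler's hereditary calculus \cite[Lemma 1.16]{A1} kills it. The paper then passes to the limit, using that the set of $m$-isometries is norm closed.

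\textbf{Your route.} You first identify the operator as $h(T)$ with $h=\tilde f/f$, via the Riesz--Dunford adjoint rule $f(T^*)^*=\tilde f(T)$. You then build a conjugation-odd logarithm $g=\tilde k-k$ of $h$ on a symmetric domain whose components are simply connected. This reduces the problem to the exponential of the skew $m$-selfadjoint operator $g(T)$, which is exactly the mechanism of Theorem~\ref{theorem 1}(ii). The explicit symmetric logarithm is what lets you handle a disconnected $G$, where Theorem~\ref{theorem 1} alone would only give similarity to a direct sum. Your Runge fallback with the real-coefficient polynomials $\frac12(p_k+\tilde p_k)$, combined with Proposition~\ref{properties}(v) and norm continuity of the defining identity, makes Step~3 independent of any connectedness hypothesis in \cite{sh}.

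\textbf{What each buys.} Your approach stays in the one-variable functional calculus and avoids rational hereditary functions. It also yields extra structure: the $m$-isometry is $e^{S}$ with $S$ a skew $m$-selfadjoint holomorphic function of $T$. The paper's approach is self-contained at the polynomial level and needs neither a logarithm nor the exponential theorem.

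\textbf{A caution on the citation.} The paper attributes both the Cayley-transform characterization (Proposition~\ref{properties}(iii)) and the exponential statement to the same \cite[Theorem 2.1]{SH-E}, so check which result you are actually quoting. The implication you need ($S$ skew $m$-selfadjoint $\Rightarrow$ $e^{S}$ is an $m$-isometry) is anyway immediate from hereditary calculus: $(e^{y}e^{x}-1)^m=(x+y)^m\varphi(x+y)^m$ with $\varphi(w)=(e^w-1)/w$ entire, and $(x+y)^m$ evaluates to $0$ at $(S^*,S)$.
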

\begin{proof}Since $\sigma(T)$ is a compact subset of the real axis, there are open discs $U_1, ...U_N$ whose centers are on the real line and 
$\sigma(T) \subseteq \cup_{i=1}^{N} U_i$
such that $\cup_{i=1}^{N} U_i$ has connected complement. Hence by Runge's Theorem \cite{C2} there is a sequence of polynomials $(p_n)_n$ such that $p_n(z) \longrightarrow f(z)$ and consequently,  $\tilde{p}_n(z) \to \tilde{f}(z)$ uniformly on compact subsets of $\cup_{i=1}^{N} U_i$. Put $p(z):=\sum_{j=0}^{k} a_jz^j$ and $r(z):=\frac{\tilde{p}(z)}{p(z)}$ when $p(z) \not =0$. Observe that 
\begin{eqnarray*}
\overline{p(z)}\left( \overline{r(z)}r(z)-1\right)p(z)&=&\tilde{p}(z)\overline{\tilde{p}(z)}-p(z)\overline{p(z)}\\
&=&\left( \sum_{j=0}^k\overline{a_j}z^j\right) \left( \sum_{j=0}^k{a_j}\overline{z}^j\right) -\left( \sum_{j=0}^k{a_j}z^j\right)\left(\sum_{j=0}^k\overline{a_j}\overline{z}^j\right)\\
&=&\sum_{\substack i \not = j, \; 0 \leq i, \; j \leq k} (\overline{a_i}(z^i \overline{z}^j- \overline{z}^iz^j)a_j) \\
&=&- \sum_{\substack  j<i, \;  0 \leq i, \; j \leq k}  (\overline{a_i}(\overline{z}^j(\overline{z}^{i-j}- z^{i-j})z^j a_j )\\
&& + \sum _{\substack  i<j, \;  0 \leq i, \; j \leq k}  (\overline{a_i}(\overline{z}^i(\overline{z}^{j-i}- z^{j-i})z^i a_j ). 
\end{eqnarray*}
Therefore, 
$$
(\overline{p(z)})^m(\overline{r(z)}r(z)-1)^m(p(z))^m= \sum_{\substack 0 \leq i \not = j \leq k }\left( p_{i,j}(\overline{z})(\overline{z}-z)^m q_{i,j}(z) \right) 
$$
for some polynomials $p_{i,j}$ and $q_{i,j}$. Now, since $\sigma(T^*)=\sigma(T) \subseteq \cup_{i=1}^NU_i$ and the function $\overline{r(z)}r(z)$ is analytic on $\cup_{i=1}^NU_i \times \cup_{i=1}^NU_i$ containing $\sigma(T) \times \sigma(T)$, by  \cite[Lemma 1.16]{A1} and substituting $T$ for $z$ and $T^*$ for $\overline{z}$, we get $r(T)= (p(T^*))^*(p(T))^{-1}$ is an $m$-isometry, for every polynomial $p$. On the other hand, since $f$ is non-vanishing on $\sigma(T)$ one can assume that $f$ is non-vanishing on $\cup_{i=1}^N U_i$ so we conclude that $r_n(z)=\frac{\tilde{p}_n(z)}{p_n(z)}$ converges uniformly to $\frac{\tilde{f}(z)}{f(z)}$ on compact subsets of this set. Thus, taking into account that the set of $m$-isometries is closed in $B(H)$, we conclude
 that $f(T^*)^*(f(T))^{-1}= \lim _{n \to \infty}p_n(T^*)^*(p_n(T))^{-1}$ is an $m$-isometry.
\end{proof}

\begin{corollary}Suppose that $S$ is a skew $m$-selfadjoint operator and $g$ is an analytic function on $\sigma(S)$ such that $
g(z) \not = 0$ for all $z\in \sigma(S)$. Then  $(g(-S^*))^*(g(S))^{-1}$ is an $m$-isometry.
\end{corollary}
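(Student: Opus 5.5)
The plan is to reduce the corollary to the preceding proposition through the substitution $T:=-iS$. This choice makes the operators match up exactly, as checked below.

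\emph{Step 1.} Since $S$ is skew $m$-selfadjoint, $iS$ is $m$-selfadjoint. Multiplying an $m$-selfadjoint operator by $-1$ keeps it $m$-selfadjoint: the defining sum in \eqref{def:mself} is homogeneous of total degree $m$ in $(T^*,T)$, so it only changes by the factor $(-1)^m$. Hence $T:=-iS$ is $m$-selfadjoint. Moreover $\sigma(T)=-i\,\sigma(S)$, which is real by Proposition~\ref{properties}(i).

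\emph{Step 2.} Define $f(z):=g(iz)$. Then $f$ is analytic on a neighbourhood of $\sigma(T)$, because $i\sigma(T)=\sigma(S)$. For $z\in\sigma(T)$ we have $f(z)=g(iz)\neq 0$, since $iz\in\sigma(S)$. The preceding proposition therefore applies and shows that $f(T^*)^*\,(f(T))^{-1}$ is an $m$-isometry.

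\emph{Step 3.} It remains to identify this operator with $(g(-S^*))^*(g(S))^{-1}$.
\begin{itemize}
\item By the composition rule of the holomorphic functional calculus (with the affine map $z\mapsto iz$), $f(T)=g(iT)=g(S)$.
\item Next, $T^*=(-iS)^*=iS^*$, so $f(T^*)=g(iT^*)=g(i\cdot iS^*)=g(-S^*)$.
\end{itemize}
Substituting both identities gives $f(T^*)^*(f(T))^{-1}=(g(-S^*))^*(g(S))^{-1}$, which is the claim.

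The only point needing care is Step 3, specifically that $f(T^*)$ is meaningful. This needs $\sigma(T^*)$ to lie in the domain of $f$. Here $\sigma(T^*)=\overline{\sigma(T)}=\sigma(T)$ because $\sigma(T)\subseteq\R$, so the operator in the preceding proposition is defined. On the $S$ side, this corresponds to $\sigma(-S^*)=-\overline{\sigma(S)}=\sigma(S)$, which holds because $\sigma(S)\subseteq i\R$. So $g(-S^*)$ is defined on the same domain, and the functional-calculus identities above are legitimate.
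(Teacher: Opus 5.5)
Your proof is correct and follows the paper's route: a linear change of variables that reduces the corollary to the preceding proposition. The paper substitutes $T=iS$ and $f(z)=g(-iz)$, which gives $f(T)=g(S)$ and $f(T^*)=g(-S^*)$ just as your choice does, and it makes your Step~1 unnecessary because $iS$ is $m$-selfadjoint by definition.
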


\begin{proof}Put $T=iS$ and $f(z)= g(-iz)$ in the previous theorem.
\end{proof}

\ \par 

{\bf Acknowledgements}

The second author is supported by MCIN/AEI/10.13039/501100011033, Project PID2019-105011GB-I00 and by PID2022-139449NB-I00, funded by
MCIN/AEI/10.13039/501100011033/FEDER, UE.

\ \par 

\small

\smallskip
\noindent M. Faghih-Ahmadi\\
\textit{  Department of Mathematics, \\
College of Sciences, Shiraz University,\\
Shiraz 71467-13565, Iran } \\
\textit{  e-mail:} faghiha@shirazu.ac.ir

\smallskip
\noindent T. Berm\'{u}dez\\
\textit{  Departamento de An\'alisis Matem\'atico, \\ Universidad de La Laguna and Instituto de Matemáticas y Aplicaciones (IMAULL),\\ Apartado 456, 38200, 
San Cristóbal de La Laguna \\(Tenerife), Spain.}\\
\textit{  e-mail:} tbermude@ull.edu.es

\smallskip
\noindent K. Hedayatian\\
\textit{ Department of Mathematics, \\
College of Sciences, Shiraz University, \\
Shiraz 71467-13565, Iran}\\
\textit{  e-mail}: hedayati@shirazu.ac.ir

\smallskip
\noindent H. Zaway\\
\textit{ Department of Mathematics and Statistics,\\
College of Science, King Faisal University, \\
Al Ahsa, Saudi Arabia}\\
\textit{  e-mail}: hzaway@kfu.edu.sa


\begin{thebibliography}{99}

\bibitem{A1}
J. Agler,
The Arveson extension theorem and coanalytic models,
\emph{Integral Equations Operator Theory} 5 (1982), 608--631.

\bibitem{BDM}
T. Bermúdez, C. Díaz Mendoza and A. Martinón,
Powers of $m$-isometries,
\emph{Studia Math.} 208 (2012), no.~3, 249--255.

\bibitem{BMN-product} 
T. Bermúdez, A. Martinón, J. Noda,  Products of $m$-isometries. 
\emph{ Linear Algebra Appl.} 438 (2013), no. 1, 80–86.


\bibitem{C2}
J. B. Conway,
\emph{Functions of One Complex Variable II},
Springer-Verlag, New York, 1978.

\bibitem{DG}
S. Djaballah and M. Guesba,
On the class of $n$-quasi-$m$-symmetric operators,
\emph{Proc. Inst. Math. Mech. Natl. Acad. Sci. Azerb.} 50 (2024), no.~2, 200--212.


\bibitem{Gu}
C. Gu,
Functional calculus for $m$-isometries and related operators on Hilbert spaces and Banach spaces,
\emph{Acta Sci. Math. (Szeged)} 81 (2015), 605--641.

\bibitem{Gu-S}
C. Gu and M. Stankus,
Some results on higher order isometries and symmetries: products and sums with a nilpotent operator,
\emph{Linear Algebra Appl.} 469 (2015), 500--509.

\bibitem{H-in}
J. W. Helton,
Infinite dimensional Jordan operators and Sturm--Liouville conjugate point theory,
\emph{Trans. Amer. Math. Soc.} 170 (1972), 305--331.

\bibitem{Helton}
J. W. Helton, 
Operators with a representation as multiplication by $×$ on a Sobolev space. Hilbert space operators and operator algebras (Proc. Internat. Conf., Tihany, 1970), pp. 279–287. (loose errata),
\emph{Colloq. Math. Soc. János Bolyai,} Vol. 5, North-Holland, Amsterdam-London, 1972.

\bibitem{jablonski}
Z. Jabłoński,
Complete hyperexpansivity, subnormality and inverted boundedness conditions,
\emph{Integral Equations Operator Theory} 44 (2002), no.~3, 316--336.

\bibitem{Le}
T. Le,
Algebraic properties of operator roots of polynomials,
\emph{J. Math. Anal. Appl.} 421 (2015), no.~2, 1238--1246.

\bibitem{mr}
S. McCullough and L. Rodman,
Hereditary classes of operators and matrices,
\emph{Amer. Math. Monthly} 104 (1997), no.~5, 415--430.

\bibitem{patel}
S. M. Patel,
2-isometric operators,
\emph{Glas. Mat. Ser. III} 37(57) (2002), no.~1, 141--145.

\bibitem{SH-E}
M. Salehi and K. Hedayatian,
Exponentially $m$-isometric operators on Hilbert spaces,
\emph{J. Math. Anal. Appl.} 522 (2023), no.~1, Paper No. 127015, 22 pp.

\bibitem{sh}
M. Salehi and K. Hedayatian,
On higher order selfadjoint operators,
\emph{Linear Algebra Appl.} 587 (2020), 358--386.

\bibitem{stankus}
M. Stankus,
$m$-isometries, $n$-symmetries and other linear transformations which are hereditary roots,
\emph{Integral Equations Operator Theory} 75 (2013), no.~3, 301--321.

\end{thebibliography}
\end{document}